\documentclass[final,3p,times,12pt]{elsarticle}
\usepackage[fleqn]{amsmath}
\usepackage{amsfonts}
\usepackage{amssymb}
\usepackage{graphicx}
\usepackage{subfigure}
\usepackage{color}
\usepackage{array}
\usepackage{mathrsfs}
\usepackage{float}
\usepackage{epstopdf}
\usepackage{booktabs}
\usepackage{tabularx}
\usepackage{lineno}
\usepackage{grffile}
\usepackage{amsthm}
\usepackage{extarrows}
\usepackage{amsmath}
\usepackage[colorlinks,linkcolor=blue,anchorcolor=blue,citecolor=blue,CJKbookmarks=True]{hyperref}
\biboptions{numbers,sort&compress}

\usepackage[american]{babel}
\usepackage{microtype}
\usepackage{ragged2e}
\allowdisplaybreaks[4]

\newcommand{\E}{\mathbb{E}}
\newcommand{\PP}{\mathbb{P}}
\newcommand{\RR}{\mathbb{R}}
\newcommand{\id}{\mathrm{id}}

\newcommand{\tabincell}[2]{\begin{tabular}{@{}#1@{}}#2\end{tabular}}
\def\[{\langle} \def\]{\rangle}
\newdefinition{lemma}{Lemma}[section]
\newdefinition{theorem}{Theorem}[section]
\newdefinition{definition}{Definition}[section]
\newdefinition{assumption}{Assumption}[section]
\newdefinition{proposition}{Proposition}[section]
\newdefinition{remark}{Remark}[section]
\newdefinition{claim}{Claim}[section]
\newdefinition{corollary}{Corollary}[section]
\newdefinition{example}{Example}[section]
\newdefinition{Appendix}{Appendix}[section]
\numberwithin{equation}{section}

\begin{document}
	\newpage
	\begin{frontmatter}
		\setcounter{page}{1}
		\title{Strong convergence rate of positivity-preserving truncated Euler--Maruyama method for multi-dimensional stochastic differential equations with positive solutions}
		\author{Xingwei Hu$^{a}$,~~Xinjie Dai$^b$,~~Aiguo Xiao$^{a,*}$}
		\cortext[cor1]{Corresponding author (Aiguo Xiao).\\
			\emph{Email addresses}: \texttt{xingweihu@smail.xtu.edu.cn} (X.\ Hu), 
			\texttt{dxj@ynu.edu.cn} (X.\ Dai),
			\texttt{xag@xtu.edu.cn} (A.\ Xiao). This research is supported by the National Natural Science Foundation of China (Nos.12471391, 12401547)}
		\address{$^a$School of Mathematics and Computational Science $\&$ Hunan Key Laboratory for Computation and Simulation in Science and Engineering, Xiangtan University, Xiangtan, Hunan 411105, China \\
			$^b$School of Mathematics and Statistics, Yunnan University, Kunming, Yunnan 650500, China}
		\date{}

		\begin{abstract}
			\par
			To construct positivity-preserving numerical methods, a vast majority of existing works employ transformation techniques such as the Lamperti transformation or logarithmic transformation. However, using these techniques often leads to the transformed stochastic differential equations (SDEs) not meeting the global monotonicity condition, particularly in multi-dimension case. This condition is essential for achieving strong convergence rates of numerical schemes. A pertinent question arises from this issue regarding the existence of an effective method with a convergence rate for solving multi-dimensional SDEs with positive solutions. This paper presents a positivity-preserving method that combines a novel truncated mapping with a truncated Euler--Maruyama discretization. We investigate both the strong convergence of the numerical method under some reasonable conditions. Furthermore, we demonstrate that this method achieves the optimal strong convergence order of 1/2 under certain additional assumptions. Numerical experiments are conducted to validate these theoretical results and demonstrate the positivity of the numerical solutions.
		\end{abstract}
		
	\end{frontmatter}
	
	\section{Introduction}
	\label{sec.1}
	Stochastic differential equations are crucial applications for mathematical modeling of random phenomena. Numerical methods for solving SDEs have been extensively applied to generate numerical solutions as substitutes for unavailable analytical solutions. However, many numerical methods, which usually fail to preserve the positivity of the original SDE, are not suitable for solving SDEs with positive solutions. A typical example is the explicit Euler--Maruyama (EM) method \cite{CM08}.\\
	\indent Recently, many positivity-preserving numerical methods have proposed. On the one hand, when considering specific models of SDEs with positive solutions, the construction of numerical methods and their analysis are finely recorded in references, which include the stochastic Lotka--Volterra competition model \cite{APPL2023, MF21}, the stochastic Lotka--Volterra predator-prey model \cite{HJZ2022}, the A\"it--Sahalia--type interest rate model \cite{DFM2023, AST, J2025, LCW2024, ASt}, the stochastic susceptible-infected-susceptible epidemic model \cite{AHOP2023, FOSC2021}, the Heston 3/2 model \cite{WG2025}, and so on. \\
	\indent On the other hand, for the scalar SDE which requires positive numerical approximations, scholars usually combine the Lamperti or logarithmic transformations with truncated/tamed/projected strategies \cite{pj2016,pj2017,GLMY2017,Hut02,Li2019,M15,M16,SS12,Eavc2016,MVZ2013,Gan01} and explicit/implicit discretization approaches to construct the positivity-preserving numerical methods \cite{HXW, LG2023, LNX2025, AL2014, TX2024, PPlogTM}. In \cite{DFF2024}, authors used truncated techniques to approximate the SDE without transformations, developing two positivity-preserving schemes: the positivity-preserving truncated EM and Milstein methods, with strong convergence rates of 1/2 and 1 respectively. However, they are inadequate for multi-dimensional SDEs valued in $\RR^d_+$. \\
	\indent For multi-dimensional stochastic Kolmogorov differential equations (SKDEs) with superlinear coefficients, \cite{YQX2024} proposed a positivity-preserving truncated EM method via the Lotka--Volterra system, achieving strong convergence. Additionally, \cite{YXF2024} presented an exponential EM scheme with strong convergence order arbitrarily close to 1/2. These methods perform well for the stochastic Lotka--Volterra system and even multi-dimensional SKDEs (a class of multi-dimensional SDEs). Recently, \cite{HCCC2025} proposed a Lagrange multiplier approach for constructing positive preserving scheme for stochastic complex systems, where the drift coefficients satisfy a one-sided Lipschitz condition, and the diffusion coefficients are globally Lipschitz. Furthermore, \cite{BCU1,BCU2} proposed positivity-preserving numerical methods for solving a class of stochastic heat equations.\\
	\indent This paper considers general multi-dimensional SDEs with non-global Lipschitz coefficients, whose solutions take values in $\RR_+^d$. We firstly demonstrate that the underlying equation has a unique strong solution $\{X(t)\}_{\{t\geq 0\}}$, with its sample path contained in $\mathbb{R}_+^d$, then present a novel positivity-preserving numerical method. Our approach utilizes a novel truncation technique which differs from the traditional one described in \cite{Li2019, M15}. Specifically, in Section 3, we define the truncation mapping $\pi_\Delta:\RR^d\rightarrow\RR_+^d$ to ensure each component of the vector $x$ being constrained within the range \(((\phi^{-1}(h(\Delta)))^{-1}, \phi^{-1}(h(\Delta)))\). In contrast, the standard truncation mapping $\pi_\Delta:\RR^d\rightarrow\RR^d$ only ensures the Euclidean norm $|x|\leq\phi^{-1}(h(\Delta))$, where functions $\phi$ and $h$ are selected based on the coefficients of the equation.\\
	\indent A main idea is to use the truncation mapping to ensure positivity of numerical approximations. Setting $\tilde{X}_0=X_0$, we compute
	\begin{flalign*}
		\tilde{X}_{k+1}=X_k+f(X_k)\Delta+g(X_k)\Delta B_k,
	\end{flalign*}
	where $\Delta=\frac{T}{N}$ for $N\in \mathbb{N}^+$. At each time step, applying the truncation mapping as $X_k=\pi_\Delta(\tilde{X}_k)$ yields the positivity-preserving sequence $\{X_k\}_{k=0}^N$, leading us to name the proposed method the positivity-preserving truncated EM (PPTEM) method. Under certain assumptions, we derive moment and inverse moment estimates for the numerical solutions. Furthermore, we utilize the inequality
	\begin{flalign*}
		\PP(\tilde{\xi}_\phi\leq T)\leq \frac{C}{(\phi^{-1}(h(\Delta)))^{\bar{p}\wedge\bar{q}}},
	\end{flalign*}
	where $\bar{p}$ and $\bar{q}$ come from Assumption 2.2, together with the stopping time
	\begin{flalign*}
		\tilde{\xi}_\phi:=\inf\{t_k\in[0,T]:\tilde{X}_{i}(t)\notin(\frac{1}{\phi^{-1}(h(\Delta))},\phi^{-1}(h(\Delta)))\hspace{0.4em}\text{for some}\hspace{0.4em}i=1,2,\cdots,d\}, 
	\end{flalign*}
	instead of the truncation mapping $X(t)=\pi_\Delta(X(t))$ when $\frac{1}{R}\leq X(t)\leq R$ used in \cite{DFF2024}. This new truncation mapping is universal, applicable to both one-dimensional and multi-dimensional cases. By combining the stochastically C-stable and B-consistent frameworks, we establish the optimal strong convergence rate of order 1/2. \\
	\indent The main contributions of this paper are as follows:
	\begin{itemize}
		\item For the general multi-dimensional SDEs with positive solutions, we demonstrate that the underlying equation admits a unique strong solution $\{X(t)\}_{\{t\geq 0\}}$, whose sample path is contained in $\RR_+^d$.
		\item Unlike the traditional truncated mapping, we propose a novel truncated mapping $\pi_\Delta$ (see \eqref{tm}) to ensure each component of the numerical solution vector is constrained within a bounded interval in $\RR_+$, which is a key technique to guarantee the positivity of numerical solutions.
		\item Compared with the existing methods \cite{YQX2024,YXF2024,HCCC2025} (a strong convergence rate of order arbitrarily close to 1/2), our PPTEM method exhibits the optimal strong convergence order 1/2 for general multi-dimensional SDEs with positive solutions and non-global Lipschitz coefficients.

	\end{itemize}
	
	\indent This paper is organized as follows. In Section 2, we introduce notations and assumptions while establishing useful lemmas and a corollary about the analytic solutions. In Section 3, we propose the PPTEM method and obtain useful lemmas and a corollary about the numerical solutions. In Section 4, we provide a detailed analysis of strong convergence and then demonstrate a strong convergence rate under additional assumptions. In Section 5, several numerical experiments illustrate our theoretical conclusions. Finally, we make a brief conclusion in Section 6.

	\section{Preliminaries and useful lemmas}
	\label{sec.2}
	
	\par Throughout this paper, we provide the following notations. \\
	\indent Let $\mathbb{N}^+$ denote the set of all positive integers. Let $A^T$ denote the transpose of the vector or matrix $A$. The symbols $|\cdot|$ and $\langle\cdot,\cdot\rangle$ denote the Euclidean norm and inner product of vectors in $\RR^d$, respectively. For a matrix $B$, its trace norm is defined as $|B|:=\sqrt{\text{trace}(B^TB)}$. 
	Let $\mathbb{E}$ denote the expectation under probability measure $\PP$. For 
	$u\in \mathbb{N}^+$, $L^u(\Omega,\RR^{d\times m})$ represents the family of  $\RR^{d\times m}$-valued random variables with norm $\Vert\xi\Vert_{L^u(\Omega;\RR^d)}=(\E[|\xi|^u])^\frac{1}{u}<\infty$. Let $\RR_+^d$ denote the positive cone in $\RR^d$, i.e., $\RR_+^d=\{x=(x_1,x_2,\cdots,x_d)^T\in\RR^d:x_i>0, \, \forall\, 1\leq i\leq d\}$.  For a set $A$, its indicator function is $I_A$, where $I_A(x)=1$ if $x\in A$ and $0$ otherwise. For two real numbers $a$ and $b$, set  $a \vee b=\max\left\{a,b\right\}$ and $a \wedge b=\min\left\{a,b\right\}$. The floor function of a real number $y$ is denoted by $\lfloor y \rfloor$, defined as the largest integer not exceeding $y$. We use $C$ to denote a generic constant, whose values may vary in different places. 
	\par Consider a $d$-dimensional SDE
	\begin{equation}\label{sde}
		\mathrm{d}X(t)=f(X(t))\mathrm{d}t+g(X(t))\mathrm{d}B(t),\quad t\in(0,T], \quad X(0)=X_0\in \RR^d_+,
	\end{equation}
	where $f=(f^1,f^2,\cdots,f^d)^T:\RR^d\rightarrow \RR^d$ and $g=(g^{i,j})_{d\times m}=(g^1,g^2,\cdots,g^m)=(g^T_1,g^T_2,\cdots,g^T_d)^T:\RR^d\rightarrow \RR^{d\times m}$. In this notation, $f^i:\RR^d\rightarrow\RR$ and $g^j:\RR^d\rightarrow\RR^d$ represent real-valued and $d\times 1$ vector functions, respectively, while $g_i:\RR^d\rightarrow \RR^m$ is a $1\times m$ vector function. $B(t)$ is an $m$-dimensional Brownian motion defined on a complete probability space $(\Omega, \mathcal{F}, \PP)$, equipped with a filtration $\left\{\mathcal{F}_t \right\}_{t \ge 0}$ that satisfies the usual conditions. 
	\begin{assumption}\label{as.1}
		Assume that the coefficients $f$ and $g$ satisfy the non-global Lipschitz condition: there exist constants $K_1>0$, $\alpha>0$ and $\beta>0$ such that for all $x, y\in \RR^d_+$,
		\begin{flalign*}
			|f(x)-f(y)|\vee|g(x)-g(y)|\leq K_1(1+|x|^\alpha+|y|^\alpha+|x|^{-\beta}+|y|^{-\beta})|x-y|.
		\end{flalign*} 
	\end{assumption}
	
	\begin{assumption}\label{as.2}
		Assume that there exist positive constants $\bar{x}_i>0,\bar{p}>1,\bar{q}>0$ and $K_{2,i}>0$ such that for any $x=(x_1,x_2,\cdots,x_d)^T\in\RR^d_+$ and any $i\in\{1,2,\cdots,d\}$,
		\begin{flalign*}
			\left\{ 
			\begin{aligned}
				&x_if^i(x)-\frac{\bar{q}+1}{2}|g_i(x)|^2\geq0, &x_i&\in(0,\bar{x}_i),\\
				&x_if^i(x)+\frac{\bar{p}-1}{2}|g_i(x)|^2\leq K_{2,i}(1+x_i^2), &x_i&\in[\bar{x}_i,\infty).
			\end{aligned}
			\right.
		\end{flalign*}
	\end{assumption}
	\par The hypotheses on coefficients ensure that the multi-dimensional SDE \eqref{sde} possesses unique global solution $\{X(t)\}_{t\in[0,T]}$ taking values in $\RR^d_+$, see Lemma \ref{Lm2.1}. Moreover, the A\"it--Sahalia model, the transformed CEV model, the stochastic Lokta--Volterra system, the SIRS epidemic model and the stochastic HIV/AIDs model satisfy the above assumptions (see Section 5).
	\begin{remark}\label{Remark1}
		One can derive from Assumption \ref{as.1} that 
		\begin{flalign*}
			|f(x)|\leq&|f(x)-f(1)|+|f(1)|\\
			\leq& K_1(1+|x|^{\alpha}+|x|^{-\beta}+|1|^{\alpha}+|1|^{-\beta})|x-1|+|f(1)|\\
			\leq& K_1(1+|x|^{\alpha}+|x|^{-\beta}+d^{\frac{\alpha}{2}}+d^{-\frac{\beta}{2}})(|x|+|1|)+|f(1)|\\
			\leq& 3d^{\frac{\alpha}{2}}K_1(1+|x|^{\alpha}+|x|^{-\beta})(|x|+d^{\frac{1}{2}})+|f(1)|\\
			\leq& 3d^{\frac{\alpha+1}{2}}K_1(1+|x|+|x|^{\alpha}+|x|^{\alpha+1}+|x|^{-\beta}+|x|^{-\beta+1})+|f(1)|.
		\end{flalign*}
		Then we conclude
		\begin{flalign*}
			|f(x)|\leq \left\{ 
			\begin{aligned}
				&9d^{\frac{\alpha+1}{2}}K_1(1+|x|^{\alpha+1})+|f(1)|, &|x|&\geq1,\\
				&12d^{\frac{\alpha+1}{2}}K_1(1+|x|^{-\beta})+|f(1)|, &|x|&<1.
			\end{aligned}
			\right.
		\end{flalign*}
		Therefore, Assumption \ref{as.1} means that
		\begin{flalign*}
			|f(x)|\vee|g(x)|\leq (21d^{\frac{\alpha+1}{2}}K_1\vee|f(1)|)(1+|x|^{\alpha+1}+|x|^{-\beta}),\quad\forall x\in\RR^d_+.
		\end{flalign*}
	\end{remark}
	
	\par In \cite[Lemma 2.1]{TX2024}, it was proven that the scalar SDE admits a unique positive solution by utilizing the one-dimensional versions of Assumptions \ref{as.1} and \ref{as.2}. Herein, we will show that the SDE \eqref{sde} has a unique global solution $\{X(t)\}_{t\in[0,T]}$ taking values in $\RR^d_+$, along with establishing the moment bound and the inverse moment of the exact solution.
	\begin{lemma}\label{Lm2.1}
		Let Assumptions \ref{as.1} and \ref{as.2} hold with $\bar{p}\geq2(\alpha+1)$ and $\bar{q}\geq2\beta$. Then the SDE \eqref{sde} has unique strong solution $\{X(t)\}_{t\in[0,T]}$ with satisfying the following equality
		\begin{flalign*}
			\PP(X(t)\in\RR^d_+, \forall t\in[0,T])=1.
		\end{flalign*}
		Furthermore, there exists a constant $C$ such that 
		\begin{flalign*}
			\PP(\gamma_n\leq T)\leq \frac{C}{n^{\bar{p}\wedge\bar{q}}},
		\end{flalign*}
		where $\gamma_n=\inf\{t\in[0,T]:X_{i}(t)\notin(n^{-1},n) \hspace{0.4em}\text{for some}\hspace{0.4em}i=1,2,\cdots,d\}$, $X_{i}(t)$ denotes the $i$-th element of $X(t)$. Besides, there exists a constant $C$ such that 
		\begin{flalign*}
			\sup_{ t \in[0,T]}\E[|X(t)|^{\bar{p}}]\leq C \quad \text{and} \quad \sup_{ t \in[0,T]}\E[|X(t)|^{-\bar{q}}]\leq C.
		\end{flalign*}
	\end{lemma}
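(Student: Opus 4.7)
The plan is to build the solution on $[0,T]$ as the limit of local solutions, exploiting a pair of Lyapunov functions tailored to the two halves of Assumption \ref{as.2}. Since $f$ and $g$ are locally Lipschitz on $\RR_+^d$ by Assumption \ref{as.1} (the bound blows up only as $|x|\to\infty$ or as some $x_i\to 0^+$), classical SDE theory produces a unique strong solution up to the explosion time $\gamma_\infty:=\lim_{n\to\infty}\gamma_n$, where $\gamma_n$ is as defined in the statement. The goal is therefore to show $\PP(\gamma_\infty\leq T)=0$, with the quantitative rate $C/n^{\bar p\wedge\bar q}$, and simultaneously to produce the moment and inverse-moment bounds.

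The core step is an It\^o estimate for the componentwise Lyapunov function
\[
V(x)\;:=\;\sum_{i=1}^{d}\Bigl(x_i^{\bar p}+x_i^{-\bar q}\Bigr),\qquad x\in\RR_+^d.
\]
For each $i$, It\^o's formula yields
\[
\mathrm d(X_i^{\bar p})=\bar p\,X_i^{\bar p-2}\Bigl[X_i f^i(X)+\tfrac{\bar p-1}{2}|g_i(X)|^2\Bigr]\mathrm dt+\text{local martingale},
\]
\[
\mathrm d(X_i^{-\bar q})=-\bar q\,X_i^{-\bar q-2}\Bigl[X_i f^i(X)-\tfrac{\bar q+1}{2}|g_i(X)|^2\Bigr]\mathrm dt+\text{local martingale}.
\]
Splitting each integrand according to whether $X_i<\bar x_i$ or $X_i\geq\bar x_i$ and applying Assumption \ref{as.2}: on $\{X_i\in(0,\bar x_i)\}$ the drift of $X_i^{-\bar q}$ is $\leq 0$ and the drift of $X_i^{\bar p}$ is trivially bounded since $X_i<\bar x_i$ (using Remark \ref{Remark1} to handle $|g_i|^2$ crudely); on $\{X_i\geq\bar x_i\}$ the drift of $X_i^{\bar p}$ is dominated by $\bar p K_{2,i}(X_i^{\bar p-2}+X_i^{\bar p})$, and $X_i^{-\bar q}\leq\bar x_i^{-\bar q}$. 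Combining, one obtains an inequality of the form
\[
\E[V(X(t\wedge\gamma_n))]\leq V(X_0)+C\int_0^t\bigl(1+\E[V(X(s\wedge\gamma_n))]\bigr)\mathrm ds,
\]
after taking expectation (the local martingale becomes a true martingale once stopped at $\gamma_n$). Gronwall's lemma then gives $\sup_{t\leq T}\E[V(X(t\wedge\gamma_n))]\leq C$ uniformly in $n$.

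From this uniform bound the rest follows quickly. At the stopping event $\{\gamma_n\leq T\}$, at time $\gamma_n$ some component $X_i(\gamma_n)$ equals $n$ or $1/n$, so $V(X(\gamma_n))\geq n^{\bar p}\wedge n^{\bar q}=n^{\bar p\wedge\bar q}$ (recall $\bar p>1$, $\bar q>0$). Markov's inequality yields
\[
\PP(\gamma_n\leq T)\leq\frac{\E[V(X(T\wedge\gamma_n))]}{n^{\bar p\wedge\bar q}}\leq\frac{C}{n^{\bar p\wedge\bar q}},
\]
which is the announced tail bound. By Borel--Cantelli, $\gamma_n\to\infty$ a.s., so $\PP(\gamma_\infty>T)=1$ and the solution stays in $\RR_+^d$ on $[0,T]$ almost surely. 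Finally, Fatou's lemma applied to $\E[V(X(t\wedge\gamma_n))]\leq C$ gives $\sup_{t\in[0,T]}\E[|X(t)|^{\bar p}]\leq C$ (after the routine equivalence of $|x|^{\bar p}$ with $\sum x_i^{\bar p}$ up to a dimensional constant since $\bar p\geq 2$) and $\sup_{t\in[0,T]}\E[|X(t)|^{-\bar q}]\leq C$ (since $|x|^{-\bar q}\leq d^{\bar q/2}\min_i x_i^{-\bar q}\leq d^{\bar q/2}\sum_i x_i^{-\bar q}$).

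The main obstacle I expect is the clean bookkeeping in the drift estimate: Assumption \ref{as.2} gives clean sign information only in disjoint regimes for each coordinate, so one has to partition the state space coordinatewise and absorb cross terms (in particular, the $X_i^{\bar p-2}|g_i(X)|^2$ contribution when $X_i<\bar x_i$ but other coordinates are large) into a constant using Remark \ref{Remark1} together with the assumption $\bar p\geq 2(\alpha+1)$ and $\bar q\geq 2\beta$. These exponent thresholds are exactly what is needed to offset the polynomial growth of $|g_i|^2$ against the powers $X_i^{\bar p-2}$ and $X_i^{-\bar q-2}$ so that the final bound is linear in $V$ and Gronwall applies.
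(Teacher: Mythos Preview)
Your proposal is correct and follows essentially the same approach as the paper: the same componentwise Lyapunov function $V(x)=\sum_i(x_i^{\bar p}+x_i^{-\bar q})$, the same coordinatewise splitting via Assumption~\ref{as.2}, the same reduction of the cross terms to $C(|X||f(X)|+|g(X)|^2)\leq C(1+|X|^{2\alpha+2}+|X|^{-2\beta})$ via Remark~\ref{Remark1}, and then the exponent conditions $\bar p\geq 2(\alpha+1)$, $\bar q\geq 2\beta$ to absorb these into $C(1+V)$, followed by Gr\"onwall, the Markov-type bound at $\gamma_n$, and Fatou. The obstacle you anticipate in your final paragraph is exactly the point the paper works through explicitly, and your identification of why the exponent thresholds are needed matches the paper's use of them.
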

	\begin{proof}
		Let $r$ be a positive integer. For any $X=(X_1,X_2,\cdots,X_d)^T\in \RR^d_+$, we set
		\begin{flalign*}
			\theta_r(X)=\big((r^{-1}\vee X_1)\wedge r,(r^{-1}\vee X_2)\wedge r,\cdots,(r^{-1}\vee X_d)\wedge r\big)^T.
		\end{flalign*} 
		As stated in Remark \ref{Remark1}, the functions
		\begin{flalign*}
			f_r(X) = f(\theta_r(X)) \quad \text{and} \quad g_r(X) = g(\theta_r(X))
		\end{flalign*}
		are globally Lipschitz continuous and linearly growing. By Theorem 2.3.4 in \cite{M01}, the solution to
		\begin{flalign*}
			\mathrm{d}X^r(t)=f_r(X^r(t))\mathrm{d}t+g_r(X^r(t))\mathrm{d}B(t)
		\end{flalign*}
		exists uniquely on $[0,T]$. Next, we define the stopping time $\tau_r=\inf\{t\in[0,T]:X^r_{i}(t)\notin(r^{-1},r) \hspace{0.4em}\text{for some}\hspace{0.4em}i=1,2,\cdots,d\}$, where $X^r_{i}(t)$ denotes the $i$-th element of $X^r(t)$. The uniqueness of $X^r(t)$ means that $X^m(t)=X^n(t)$ for $t\in[0,T\wedge\tau_n]$, where $m>n$. Therefore, $\tau_n$ is nondecreasing, and we set $\tau_\infty=\lim_{n\rightarrow\infty}\tau_n$.\\
		\indent Let $\omega\in\Omega$. For an arbitrary $t<\tau_\infty(\omega)$, there exists $r>0$ such that $t<\tau_r(\omega)\leq\tau_\infty(\omega)$. 
		For a fixed positive integer $n$ and arbitrary $t\in[0,T]$, we have
		\begin{flalign*}
			X(t\wedge\tau_n)&=X^n(t\wedge\tau_n)\\
			&=X_0+\int_{0}^{t\wedge\tau_n}f_n(X^n(s))\mathrm{d}s+\int_{0}^{t\wedge\tau_n}g_n(X^n(s))\mathrm{d}B(s)\\
			&=X_0+\int_{0}^{t\wedge\tau_n}f(X(s))\mathrm{d}s+\int_{0}^{t\wedge\tau_n}g(X(s))\mathrm{d}B(s).
		\end{flalign*}
		Define the Lyapunov function 
		\begin{flalign*}
			V(X)=\sum_{i=1}^{d}(X_i^{\bar{p}}+X_i^{-\bar{q}}),
		\end{flalign*}
		and its operator 
		\begin{flalign*}
			\mathcal{L}V(X)=V'(X)f(X)+\frac{1}{2}\text{trace}[g^T(X)V''(X)g(X)],
		\end{flalign*}
		where $V'(X)$ and $V''(X)$ denote the first and second derivative of $V(X)$, respectively. This follows that
		\begin{flalign*}
			\mathcal{L}V(X)=&V'(X)f(X)+\frac{1}{2}\text{trace}[g^T(X)V''(X)g(X)]\\
			=&\sum_{i=1}^{d}\Big(\bar{p}X_i^{\bar{p}-1}f^i(X)-\bar{q}X_i^{-\bar{q}-1}f^i(X)+\frac{1}{2}\big(\bar{p}(\bar{p}-1)X_i^{\bar{p}-2}+\frac{1}{2}\bar{q}(\bar{q}+1)X_i^{-\bar{q}-2})(\sum_{j=1}^{m}g^{i,j}(X))^2\Big)\\
			=&\sum_{i=1}^{d}\bar{p}X_i^{\bar{p}-2}[X_if^i(X)+\frac{\bar{p}-1}{2}|g_i(X)|^2]-\sum_{i=1}^{d}\bar{q}X_i^{\bar{q}-2}[X_if^i(X)-\frac{\bar{q}+1}{2}|g_i(X)|^2].
		\end{flalign*}
		Using Assumption \ref{as.2} leads to
		\begin{flalign*}
			\mathcal{L}V(X)\leq&\bar{p}\sum_{i=1}^{d}X_i^{\bar{p}-2}\Big(K_{2,i}(1+X_i^2)I_{\{X_i\geq\bar{x}_i\}}+(X_if^i(X)+\frac{\bar{p}-1}{2}|g_i(X)|^2)I_{\{0<X_i<\bar{x}_i\}}\Big)\\
			&-\bar{q}\sum_{i=1}^{d}X_i^{-\bar{q}-2}(X_if^i(X)-\frac{\bar{q}+1}{2}|g_i(X)|^2)I_{\{X_i\geq\bar{x}_i\}}\\
			\leq&C\Big(1+\sum_{i=1}^{d}X_i^{\bar{p}}I_{\{X_i\geq\bar{x}_i\}}\Big)+\bar{p}\sum_{i=1}^{d}X_i^{\bar{p}-2}(X_i|f^i(X)|+\frac{\bar{p}-1}{2}|g_i(X)|^2)I_{\{0<X_i<\bar{x}_i\}}\\
			&+\bar{q}\sum_{i=1}^{d}X_i^{-\bar{q}-2}(X_i|f^i(X)|+\frac{\bar{q}+1}{2}|g_i(X)|^2)I_{\{X_i\geq\bar{x}_i\}}\\
			\leq&C(1+\sum_{i=1}^{d}X_i^{\bar{p}})+C(|X||f(X)|+|g(X)|^2)\\
			\leq&C(1+\sum_{i=1}^{d}X_i^{\bar{p}}+|X|^{2\alpha+2}+|X|^{-2\beta}).
		\end{flalign*}
		It holds that
		\begin{flalign*}
			|X|^2\leq d\max_{1\leq i\leq d}X_i^2,
		\end{flalign*}
		so we have 
		\begin{flalign*}
			|X|^{2\alpha+2}\leq d^{\alpha+1}\max_{1\leq i\leq d} X_i^{2\alpha+2}\leq C\sum_{i=1}^{d}X_i^{2\alpha+2}.
		\end{flalign*}
		On the other hand, 
		\begin{flalign*}
			|X|^{-2\beta}=(\sum_{i=1}^{d}X_i^2)^{-\beta}\leq (d\min_{1\leq i \leq d}X_i^2)^{-\beta}\leq C\sum_{i=1}^{d}X_i^{-2\beta}.
		\end{flalign*}
		Thus, we get
		\begin{flalign*}
			\mathcal{L}V(X)\leq& C(1+\sum_{i=1}^{d}X_i^{\bar{p}}+\sum_{i=1}^{d}X_i^{2\alpha+2}+\sum_{i=1}^{d}X_i^{-2\beta})\\
			\leq& C(1+\sum_{i=1}^{d}X_i^{\bar{p}}+\sum_{i=1}^{d}X_i^{-\bar{q}})\\
			=&C(1+V(X)),
		\end{flalign*}
		where $\bar{p}\geq 2\alpha+2$ and $\bar{q}\geq 2\beta$ were used. Then $V(X)$ is an It\^o process with the stochastic differential equation given by 
		\begin{flalign*}
			\mathrm{d}V(X)=\mathcal{L}V(X)\mathrm{d}t+V'g(X)\mathrm{d}B(t).
		\end{flalign*}
		Dynkin's formula implies that there exists a constant $C$ such that 
		\begin{flalign}\label{eq21}
			\E V(X(\gamma_n\wedge T))\leq V(X(0))+CT+C\E\int_{0}^{\gamma_n\wedge T}V(X(t))\mathrm{d}t,
		\end{flalign}
		where $\gamma_n=\inf\{t\in[0,T]:X_{i}(t)\notin(n^{-1},n) \hspace{0.4em}\text{for some}\hspace{0.4em}i=1,2,\cdots,d\}$, $X_{i}(t)$ denotes the $i$-th element of $X(t)$. Then the Gr\"onwall inequality implies that
		\begin{flalign*}
			\E V(X(\gamma_n\wedge T))\leq Ce^{CT}\leq C.
		\end{flalign*}
		For every $\omega\in\{\gamma_n\leq T\}$, there is some $i$ such that $X_i(\tau_n,\omega)$ equals either $n$ or $\frac{1}{n}$. Hence 
		\begin{flalign*}
			\PP(\gamma_n\leq T)n^{\bar{p}\wedge\bar{q}}\leq \E [I_{\gamma_n\leq T}(\omega)V(X(\gamma_n,\omega))]=\E V(X(\gamma_n\leq T))\leq C.
		\end{flalign*}
		Therefore, we have 
		\begin{flalign*}
			\PP(\gamma_n\leq T)\leq \frac{C}{n^{\bar{p}\wedge\bar{q}}}
		\end{flalign*}
		and $\PP(\gamma_\infty>T)=1$. It means that the SDE \eqref{sde} has unique strong solution on [0, T] and
		\begin{flalign*}
			\PP(X(t)\in\RR_+^d,\forall t\in[0,T])=1.
		\end{flalign*} 
		Similarly, we have $\PP(\gamma_\infty>t)=1$. By the Fatou lemma, we have
		\begin{flalign*}
			\E V(X(t))\leq\mathop{\underline{\lim}}\limits_{n\rightarrow\infty}\E V(X(\gamma_n\wedge t))\leq C.
		\end{flalign*}
		Consequently, the inequality $\sum_{i=1}^{d}(X_i^{\bar{p}}+X_i^{-\bar{q}})\leq C$ holds. That is to say $\sum_{i=1}^{d}X_i^{-\bar{q}}\leq C$. Therefore, there exists $\delta\geq \delta_0>0$ such that $X_i>\delta$. Then for any $i$, we have $|X|>\sqrt{d}\delta$. Thus we have $|X|^{-\bar{q}}<(\sqrt{d}\delta)^{-\bar{q}}=d^{-\frac{\bar{q}}{2}}\delta^{-\bar{q}}\leq C$. Besides, $|X|^p=(\sum_{i=1}^{d}X_i^2)^{\frac{\bar{p}}{2}}\leq C(\sum_{i=1}^{d}X_i^{\bar{p}})\leq C$, where the H\"older inequality was used. Therefore, we have 
		\begin{flalign*}
			\E[|X(t)|^{\bar{p}}+|X(t)|^{-\bar{q}}]\leq C.
		\end{flalign*}
		The proof is completed.
	\end{proof}
	
	\section{Positivity-preserving truncated Euler--Maruyama method}
	\label{sec.3}
	
	\par Now we construct the PPTEM method. 
	As derived from \ref{Remark1}, we have
	\begin{flalign*}
		\frac{|f(x)|}{1+|x|} \vee\frac{ |g(x)| }{1+|x|}\leq H_0(1+|x|^\alpha+|x|^{-\beta-1}),
	\end{flalign*} 
	where $H_0\geq(21d^{\frac{\alpha+1}{2}}K_1\vee|f(1)|)$.
	Firstly, let $\phi(R)=2H_0R^{\alpha\vee(\beta+1)}$ be a strictly increasing function satisfying
	\begin{equation*}
		\sup_{R^{-1}\leq |x|\leq R} \frac{|f(x)|}{1+|x|} \vee\frac{ |g(x)| }{1+|x|} \leq \phi(R), \quad \forall R > 1.
	\end{equation*}
	Let $\phi^{-1}$ denote the inverse function of $\phi$, which is also a strictly increasing continuous function with $\phi^{-1}: (\phi(1), \infty) \rightarrow (1,\infty)$. Next, we select positive constants $\hat{U}$, $\hat{K_0}\geq 1$, $0<\bar{k}\leq\frac{1}{2}$, and a strictly decreasing function $h:(0, 1) \rightarrow (\phi(1), \infty)$ such that for all $\Delta\in(0,1)$,
	\begin{equation}\label{hD}
		\lim_{\Delta \rightarrow 0}h(\Delta) =\infty \quad \mbox{and} \quad \phi(\hat{K_0})\Delta^{-\bar{k}+\frac{1}{2}}\leq \Delta^{\frac{1}{2}}h(\Delta) \leq \hat{U}. 
	\end{equation}
	For a fixed $\Delta \in (0,1)$ and any $x \in \RR^d$, we define the truncated mapping $\pi_\Delta:\RR^d\rightarrow\RR_+^d$ by
	\begin{equation}\label{tm}
		\pi_\Delta(x) = \Big(\big((\phi^{-1}(h(\Delta)))^{-1}\vee x_1\big)\wedge \phi^{-1}(h(\Delta)),\cdots,\big((\phi^{-1}(h(\Delta)))^{-1}\vee x_d\big)\wedge \phi^{-1}(h(\Delta))\Big)^T.
	\end{equation}
	
	\begin{remark}\label{Remark2}
		By Remark \ref{Remark1} and $\phi(R)=2H_0R^{\alpha\vee(\beta+1)}$, if $\alpha\geq\beta+1$, we might as well let $\phi(R)=2H_0R^{\alpha}$, then  $\phi^{-1}(R)=(2H_0)^{-\frac{1}{\alpha}}R^{\frac{1}{\alpha}}$ holds accordingly. Thus, we observe from \eqref{hD} that 
		\begin{flalign*}
			\hat{K_0}\Delta^{-\frac{\bar{k}}{\alpha}}\leq\phi^{-1}(h(\Delta))\leq (2H_0)^{-\frac{1}{\alpha}}\hat{U}^{\frac{1}{\alpha}}\Delta^{-\frac{1}{2\alpha}}.
		\end{flalign*}
		Otherwise, $\hat{K_0}\Delta^{-\frac{\bar{k}}{\beta+1}}\leq\phi^{-1}(h(\Delta))\leq (2H_0)^{-\frac{1}{\beta+1}}\hat{U}^{\frac{1}{\beta+1}}\Delta^{-\frac{1}{2(\beta+1)}}$. 
	\end{remark}
	\par To introduce our numerical method, we define a uniform mesh $\mathcal{T}_N: 0=t_0<t_1<\cdots<t_N=T$ with $t_k=k\Delta$, where $\Delta=\frac{T}{N}$ for $N\in \mathbb{N}^+$. For any given step size $\Delta\in (0,1)$, we introduce the following truncated EM discrete approximation for \eqref{sde}, i.e.,
	\begin{flalign}\label{TEM}
		\left\{ 
		\begin{aligned}
			\tilde{X}_0&=X_0,  \\
			\tilde{X}_{k+1}&=X_k+f(X_k)\Delta+g(X_k)\Delta B_k,\\
			X_{k+1}&=\pi_\Delta(\tilde{X}_{k+1}), \quad k\in\{0,1,2,\cdots,N-1\}, 
		\end{aligned}
		\right.
	\end{flalign}
	where $\Delta B_k=B(t_{k+1})-B(t_k)$. Moreover, we define
	\begin{flalign*}
		\tilde{X}(t):=\tilde{X}_k \quad \text{and} \quad X_\Delta(t):=X_k,\quad t\in[t_k,t_{k+1}),k\in\{0,1,\cdots,N-1\}.
	\end{flalign*} 
	
	\indent Notice that $\{\tilde{X}(t)\}_{ t \in[0,T]}$ does not preserve positivity, whereas  $\{X_\Delta(t)\}_{ t \in[0,T]}$ does. We refer to the numerical scheme $\{X_\Delta(t)\}_{ t \in[0,T]}$ and \eqref{TEM} as the PPTEM method. An important step in getting the convergence results of the PPTEM method is to show the moment and inverse moment bounds of $\tilde {X}(t)$ and $X_\Delta(t)$. 
	\begin{lemma}\label{pp numerical integral}
		Assume that the conditions of Lemma \ref{Lm2.1} hold. Let $R$ be a positive integer and define the stopping time $\tilde{\xi}_R:=\inf\{t\in[0,T]:\tilde{X}_{i}(t)\notin(\frac{1}{R},R)\hspace{0.4em}\text{for some}\hspace{0.4em}i=1,2,\cdots,d\}$, where $\tilde{X}_i(t)$ denotes the $i$-th element of $\tilde{X}(t)$. Let $\Delta^*\in(0,1)$ be sufficiently small such that $\phi^{-1}(h(\Delta^*))\geq R$. Then there exists a positive constant $C$ independent of $\Delta$ such that
		\begin{equation}\label{numerical1}
			\sup_{\Delta\in (0,\Delta^*]}\sup_{ t \in[0,T]}\E[|\tilde{X}(t)|^{\bar{p}}]\leq C \quad \text{and} \quad \sup_{\Delta\in(0,\Delta^*]}\sup_{ t \in[0,T]}\E[|\tilde{X}(t)|^{-\bar{q}}]\leq C.
		\end{equation}
	\end{lemma}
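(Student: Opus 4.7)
The plan is to adapt the Lyapunov argument from the proof of Lemma~\ref{Lm2.1} to the discrete scheme \eqref{TEM}, using the Lyapunov function $V(x)=\sum_{i=1}^{d}(x_i^{\bar p}+x_i^{-\bar q})$ and invoking the stopping time $\tilde\xi_R$ to supply the local compactness required for the discrete Taylor manipulations. The choice $\phi^{-1}(h(\Delta^*))\ge R$ is precisely what makes this compatible with the truncation: on $\{t_k<\tilde\xi_R\}$ the mapping $\pi_\Delta$ acts as the identity, so $X_k=\tilde X_k$, and on $\{t_{k+1}\le\tilde\xi_R\}$ both $X_k$ and $\tilde X_{k+1}$ lie in $(1/R,R)^d$, where $V$ and its first three derivatives are bounded by $R$-dependent constants.

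First I would carry out a one-step Taylor expansion
\begin{equation*}
V(\tilde X_{k+1})-V(X_k)=V'(X_k)(\tilde X_{k+1}-X_k)+\tfrac12(\tilde X_{k+1}-X_k)^\top V''(X_k)(\tilde X_{k+1}-X_k)+\mathcal R_k,
\end{equation*}
with increment $\tilde X_{k+1}-X_k=f(X_k)\Delta+g(X_k)\Delta B_k$. Conditioning on $\mathcal F_{t_k}$ and using $\E[\Delta B_k\mid\mathcal F_{t_k}]=0$ together with $\E[\Delta B_k\Delta B_k^\top\mid\mathcal F_{t_k}]=I\Delta$ reproduces the generator $\mathcal LV(X_k)\Delta$ already computed in Lemma~\ref{Lm2.1}, to which the key inequality $\mathcal LV(X_k)\le C(1+V(X_k))$ applies verbatim because $X_k\in\RR_+^d$. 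This yields, after stopping at $N_R=\lfloor\tilde\xi_R/\Delta\rfloor$, a one-step recursion
\begin{equation*}
\E[V(\tilde X_{(k+1)\wedge N_R})\mid\mathcal F_{t_k}]\le(1+C\Delta)V(\tilde X_{k\wedge N_R})+C\Delta+\E[\mathcal E_k\mid\mathcal F_{t_k}],
\end{equation*}
in which $\mathcal E_k$ collects the $O(\Delta^2)$ contribution from the Itô-trace correction together with the third-order Taylor remainder; iterating and applying discrete Gronwall then gives $\sup_{k\le N}\E V(\tilde X_{k\wedge N_R})\le C$ uniformly in $\Delta$.

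The main obstacle is the uniform control of $\E\mathcal E_k$: the third derivatives of $V$ behave like $x^{\bar p-3}+x^{-\bar q-3}$ and blow up near the coordinate axes, and the coefficient bound $|f(X_k)|\vee|g(X_k)|\le h(\Delta)(1+|X_k|)$ coming from Remark~\ref{Remark1} and the definition of $\phi$ grows as $\Delta\downarrow0$. The stopping time localizes the Taylor intermediate points to $(1/R,R)^d$, rendering the $V$-derivatives bounded by $R$-dependent constants, and the Gaussian moment estimates on $\Delta B_k$ combined with the crucial scaling $\Delta^{1/2}h(\Delta)\le\hat U$ from \eqref{hD} then force $\E|\mathcal E_k|=O(\Delta^{3/2})$, keeping it absorbable. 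To recover the unstopped bounds in \eqref{numerical1}, I would split $\E|\tilde X(t)|^{\bar p}=\E[|\tilde X(t)|^{\bar p}I_{\tilde\xi_R>t}]+\E[|\tilde X(t)|^{\bar p}I_{\tilde\xi_R\le t}]$, bounding the first term via the stopped Lyapunov estimate and the second by combining a crude $\Delta$-dependent pointwise bound on $|\tilde X_{k+1}|$ (which follows from the truncation of $X_k$) with a Chebyshev-type decay of $\PP(\tilde\xi_R\le T)$ obtained from the stopped bound, mirroring the exit-probability estimate proved in Lemma~\ref{Lm2.1}; the same scheme applied to the $x_i^{-\bar q}$ part of $V$ delivers the inverse-moment estimate.
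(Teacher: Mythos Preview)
Your approach shares the same skeleton as the paper's: observe that on $\{t_k<\tilde\xi_R\}$ the truncation is the identity so $X_k=\tilde X_k$, and then push the Lyapunov function $V$ from Lemma~\ref{Lm2.1} through the scheme. The paper, however, is far terser: rather than a discrete Taylor expansion, it writes the stopped approximation as a continuous It\^o integral
\[
\tilde X(t\wedge\tilde\xi_R)=X_0+\int_0^{t_k\wedge\tilde\xi_R}f(\tilde X(s))\,\mathrm{d}s+\int_0^{t_k\wedge\tilde\xi_R}g(\tilde X(s))\,\mathrm{d}B(s)
\]
and simply invokes ``the techniques in the proof of Lemma~\ref{Lm2.1}'' (It\^o's formula, $\mathcal LV\le C(1+V)$, Gr\"onwall), without discussing remainder terms or the mismatch between the piecewise-constant coefficients and the continuous interpolant at which $V',V''$ are evaluated. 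Your discrete Taylor route is essentially the explicit version of what the paper leaves implicit. The paper also removes the stopping by following Lemma~\ref{Lm2.1}, i.e.\ via Fatou's lemma, rather than the split-and-Chebyshev argument you propose.

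One point in your plan deserves caution. Because you localize the Taylor intermediate point to $(1/R,R)^d$ and bound $V'''$ by $R$-dependent constants, your remainder estimate $\E|\mathcal E_k|=O(\Delta^{3/2})$ carries an $R$-dependent multiplicative constant; consequently the stopped Lyapunov bound $\sup_k\E V(\tilde X_{k\wedge N_R})\le C_R$ and the resulting exit-probability estimate $\PP(\tilde\xi_R\le T)\le C_R/R^{\bar p\wedge\bar q}$ need not decay in $R$, which jeopardizes your final splitting step (particularly for the inverse moment, where a crude pointwise bound on $|\tilde X_{k+1}|^{-\bar q}$ is not available since components of $\tilde X_{k+1}$ can be arbitrarily small). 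The paper's brief proof does not confront this difficulty either, but the Fatou route it inherits from Lemma~\ref{Lm2.1} at least does not require a quantitative decay rate for the exit probability, and so is the lighter device for passing from stopped to unstopped bounds.
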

	\begin{proof}
		Firstly, we see that $\tilde{X}_{i}(s)\in(\frac{1}{R},R)$ for any $i\in\{1,2,\cdots,d\}$ and $s\in[T\wedge\tilde{\xi}_R]$. Since $\phi^{-1}(h(\Delta))\geq R$, we have $\tilde{X}_{i}(s)\in(\frac{1}{\phi^{-1}(h(\Delta))},\phi^{-1}(h(\Delta)))$ for any $i\in\{1,2,\cdots,d\}$ and hence $X_k=\tilde{X}_k$ for $t_k\in[T\wedge\tilde{\xi}_R]$. Then for an arbitrarily fixed $\Delta\in(0,1)$ and any $0\leq t\leq T$, there exists unique integer $k\geq0$ such that for $t_k\leq t<t_{k+1}$, it follows from \eqref{TEM} that
		\begin{flalign*}
			\tilde{X}(t\wedge\tilde{\xi}_R)=X_0+\int_{0}^{t_k\wedge\tilde{\xi}_R}f(\tilde{X}(s))\mathrm{d}s+\int_{0}^{t_k\wedge\tilde{\xi}_R}g(\tilde{X}(s))\mathrm{d}B(s).
		\end{flalign*}
		\indent Using the techniques in the proof of Lemma \ref{Lm2.1} leads to
		\begin{flalign*}
			\sup_{ t \in[0,T]}\E[|\tilde{X}(t)|^{\bar{p}}]\leq C \quad \text{and} \quad \sup_{ t \in[0,T]}\E[|\tilde{X}(t)|^{-\bar{q}}]\leq C, \quad \forall\Delta\in(0,\Delta^*],
		\end{flalign*}
		where $C$ is independent of $\Delta$. \hfill
	\end{proof}
	\begin{corollary}\label{Corollary2}
		Let the conditions of Lemma \ref{pp numerical integral} hold. Then there exists a positive constant $C$ independent of $\Delta$ such that
		\begin{flalign}\label{supsupED}
			\sup_{\Delta\in (0,\Delta^*]}\sup_{ t \in[0,T]}\E[|X_\Delta(t)|^{\bar{p}}]\leq C\quad \text{and}\quad \sup_{\Delta\in (0,\Delta^*]}\sup_{ t \in[0,T]}\E[|X_\Delta(t)|^{-\bar{q}}]\leq C.
		\end{flalign}
		Furthermore, define the stopping time $\tilde{\xi}_\phi:=\inf\{t_k\in[0,T]:\tilde{X}_{i}(t)\notin(\frac{1}{\phi^{-1}(h(\Delta))},\phi^{-1}(h(\Delta)))\hspace{0.4em}\text{for some}\hspace{0.4em}i=1,2,\cdots,d\}$. Then for any $\bar{p}\geq2(\alpha+1)$ and $\bar{q}\geq2\beta$, there exists a positive constant $C$ independent of $\Delta$ such that
		\begin{flalign}\label{corollary2}
			\PP(\tilde{\xi}_\phi\leq T)\leq \frac{C}{(\phi^{-1}(h(\Delta)))^{\bar{p}\wedge\bar{q}}}.
		\end{flalign}
	\end{corollary}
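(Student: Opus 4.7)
The plan is to deduce both estimates from Lemma~\ref{pp numerical integral} by exploiting the componentwise structure of the truncation $\pi_\Delta$ in~\eqref{tm} and, for the tail bound, running a Markov-type argument with the same Lyapunov function $V(X) := \sum_{i=1}^d (X_i^{\bar p} + X_i^{-\bar q})$ used in the proof of Lemma~\ref{Lm2.1}.

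For \eqref{supsupED} I would perform a coordinatewise case analysis of \eqref{tm}. Writing $R_\Delta := \phi^{-1}(h(\Delta)) \geq 1$, for each $i$ exactly one of the following holds: $(X_\Delta(t))_i = \tilde X_i(t)$ when $\tilde X_i(t) \in [R_\Delta^{-1}, R_\Delta]$; $(X_\Delta(t))_i = R_\Delta < \tilde X_i(t)$ when $\tilde X_i(t) > R_\Delta$; or $(X_\Delta(t))_i = R_\Delta^{-1} > \tilde X_i(t)$ when $\tilde X_i(t) < R_\Delta^{-1}$. Since $R_\Delta \geq 1$, combining the three cases yields the pointwise bounds $(X_\Delta(t))_i^{\bar p} \leq \tilde X_i(t)^{\bar p} + 1$ and $(X_\Delta(t))_i^{-\bar q} \leq \tilde X_i(t)^{-\bar q} + 1$ for each $i$. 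Summing in $i$ and using the elementary inequalities $|x|^{\bar p} \leq C\sum_{i=1}^d x_i^{\bar p}$ and $|x|^{-\bar q} \leq C\sum_{i=1}^d x_i^{-\bar q}$ on $\RR_+^d$ already exploited in Lemma~\ref{Lm2.1}, taking expectation and invoking \eqref{numerical1} delivers \eqref{supsupED}.

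For \eqref{corollary2} I would imitate the stopping-time argument at the end of the proof of Lemma~\ref{Lm2.1}. On the event $\{\tilde\xi_\phi \leq T\}$, the definition of $\tilde\xi_\phi$ forces some coordinate of $\tilde X(\tilde\xi_\phi)$ to equal either $R_\Delta$ or $R_\Delta^{-1}$, so $V(\tilde X(\tilde\xi_\phi)) \geq R_\Delta^{\bar p \wedge \bar q}$. Markov's inequality then reduces the claim to producing a uniform bound $\E V(\tilde X(\tilde\xi_\phi \wedge T)) \leq C$, with $C$ independent of $\Delta$. I expect this last step to be the main obstacle: I would establish it by repeating the Dynkin--Gr\"onwall computation of Lemma~\ref{Lm2.1} applied to the stopped discrete PPTEM scheme, using the key observation that $X_k = \tilde X_k$ for every mesh point $t_k \leq \tilde\xi_\phi$, so that the recursion coincides with an unprojected EM step and Assumption~\ref{as.2} can be applied to the action of $\mathcal{L}$ on $V$ at these points exactly as in the continuous-time case. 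Once this uniform bound is in hand, dividing through by $R_\Delta^{\bar p \wedge \bar q}$ closes the proof.
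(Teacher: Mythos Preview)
Your treatment of \eqref{corollary2} is correct and coincides with the paper's: the paper merely says the argument is ``analogous to that of Lemma~\ref{Lm2.1}'', i.e.\ exactly the Markov--Dynkin--Gr\"onwall computation with $V(X)=\sum_i(X_i^{\bar p}+X_i^{-\bar q})$ that you describe, using $X_k=\tilde X_k$ for $t_k<\tilde\xi_\phi$.

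For \eqref{supsupED}, however, the componentwise comparison has a genuine gap on the inverse-moment side. First, the pointwise bound $(X_\Delta(t))_i^{-\bar q}\le \tilde X_i(t)^{-\bar q}+1$ is undefined when $\tilde X_i(t)\le 0$, and this occurs with positive probability since $\tilde X_{k+1}=X_k+f(X_k)\Delta+g(X_k)\Delta B_k$ involves an unbounded Gaussian increment. Second, and more seriously, even on $\{\tilde X_i(t)>0\ \forall i\}$ the argument does not close: Lemma~\ref{pp numerical integral} only gives $\E[|\tilde X(t)|^{-\bar q}]\le C$, and the Euclidean-norm inverse moment does \emph{not} control $\sum_i \tilde X_i(t)^{-\bar q}$ (in $\RR_+^2$ take $\tilde X=(1,\epsilon)$: the norm stays near $1$ while $\tilde X_2^{-\bar q}\to\infty$). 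So \eqref{numerical1} as stated is too weak for what you need. The $\bar p$-moment half is essentially fine once you replace $\tilde X_i^{\bar p}$ by $|\tilde X_i|^{\bar p}$ and use $\sum_i|\tilde X_i|^{\bar p}\le C|\tilde X|^{\bar p}$.

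The paper sidesteps both issues by not comparing to $\tilde X$ at all for \eqref{supsupED}: it says to follow the proof of Lemma~\ref{pp numerical integral}, i.e.\ rerun the stopped Lyapunov computation directly. In fact your own plan for \eqref{corollary2} already yields the uniform bound $\E[V(\tilde X(t\wedge\tilde\xi_\phi))]\le C$ along the way; since $V$ controls componentwise inverse moments and $X_\Delta$ agrees with $\tilde X$ before $\tilde\xi_\phi$, routing the argument for \eqref{supsupED} through $V$ rather than through the norm estimate \eqref{numerical1} is the natural fix.
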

	\begin{proof}
		Following the proof of Lemma \ref{pp numerical integral}, one can derive \eqref{supsupED}. Moreover, \eqref{corollary2} can be obtained, and its proof is analogous to that of Lemma \ref{Lm2.1}.\hfill
	\end{proof}

	\section{Strong convergence analysis}
	\par From Lemma \ref{Lm2.1}, we again consider the SDE
	\begin{flalign}\label{Y(t)}
		\mathrm{d}X^r(t)=f_r(X^r(t))\mathrm{d}t+g_r(X^r(t))\mathrm{d}B(t)
	\end{flalign}
	for $t\geq 0$ with the initial value $X^r(0)=X(0)$. Recall that \eqref{Y(t)} has unique global solution $\{X^r(t)\}_{t\geq0}$.\\
	\indent Fixing the stepsize $\Delta\in(0,1)$ and applying the EM method to \eqref{Y(t)} yield that
	\begin{flalign}\label{Y_k}
		\left\{ 
		\begin{aligned}
			Y_0&=X(0),  \\
			Y_{k+1}&=Y_k+f_r(Y_k)\Delta+g_r(Y_k)\Delta B_k, \quad k \in\{0,1,\cdots,N-1\}.
		\end{aligned}
		\right.
	\end{flalign}
	Then we introduce the continuous-time version of \eqref{Y_k} as follows
	\begin{flalign*}
		Y(t)=Y_k,\quad \forall t\in[t_k,t_{k+1}),
	\end{flalign*}
	and the It\^o process
	\begin{flalign*}
		\tilde{Y}(t)=Y_0+\int_{0}^{t}f_r(Y(s))\mathrm{d}s+\int_{0}^{t}g_r(Y(s))\mathrm{d}B(s).
	\end{flalign*}
	It is well known that (see e.g., \cite{M01})
	\begin{flalign}\label{p_0}
		\E[\sup_{ t \in[0,T]}|\tilde{Y}(t)-X^r(t)|^{p_0}]\leq C_r\Delta^\frac{p_0}{2},
	\end{flalign}
	where $C_r$ is a positive constant dependent on $r$ and $T$ but independent of $\Delta$.\\
	\indent Notice that 
	\begin{flalign}\label{a.s.}
		X(t\wedge\tau_r)=X^r(t\wedge\tau_r)\quad a.s., \quad\forall t\geq 0.
	\end{flalign}
	\eqref{p_0} and \eqref{a.s.} indicate
	\begin{flalign}\label{tYX}
		\E\big[\sup_{ t \in[0,T]}|\tilde{Y}(t\wedge\tau_r)-X(t\wedge\tau_r)|^{p_0}\big]\leq C_r\Delta^\frac{p_0}{2}, \quad\forall\Delta\in(0,1).
	\end{flalign}
	
	\begin{lemma}\label{Lemma4.1}
		Let Assumptions \ref{as.1} and \ref{as.2} hold and $n\geq3$ be a sufficiently large integer such that
		\begin{flalign}\label{con3.3}
			\Big(\frac{2n}{2n-1}\Big)^p(T+1)^\frac{p}{2n}\leq2.
		\end{flalign}
		Then
		\begin{flalign}\label{as3.3}
			\E[\sup_{ t \in[0,T]}|\tilde{Y}(t)-Y(t)|^p]\leq C_r\Delta^{\frac{p(n-1)}{2n}}.
		\end{flalign}
	\end{lemma}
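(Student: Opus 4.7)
\medskip

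\noindent\textbf{Proof proposal.} The plan is to exploit the fact that on each sub-interval $[t_k,t_{k+1})$ the difference $\tilde{Y}(t)-Y(t)$ is an explicit linear function of the Brownian increment, and that $f_r$, $g_r$ are globally bounded (not merely Lipschitz), so a pathwise estimate plus Doob's maximal inequality is enough. The technical trick is to pass from $\sup_{t\in[0,T]}$ to $\max_{0\le k\le N-1}$ via a ``maximum is bounded by a sum'' argument with exponent $2n$, which is exactly what generates the fractional rate $\Delta^{p(n-1)/(2n)}$.

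First I would verify by induction on $k$ that $\tilde{Y}(t_k)=Y_k$; consequently, for $t\in[t_k,t_{k+1})$,
\begin{flalign*}
    \tilde{Y}(t)-Y(t)=f_r(Y_k)(t-t_k)+g_r(Y_k)\bigl(B(t)-B(t_k)\bigr).
\end{flalign*}
Since $\theta_r$ takes values in the compact box $[r^{-1},r]^d\subset\RR^d_+$ and $f,g$ are continuous there (Assumption \ref{as.1}), the functions $f_r$ and $g_r$ are uniformly bounded by a constant $M_r$ depending only on $r$. Hence
\begin{flalign*}
    \sup_{t\in[t_k,t_{k+1})}|\tilde{Y}(t)-Y(t)|\leq M_r\Delta+M_r\sup_{t\in[t_k,t_{k+1})}|B(t)-B(t_k)|.
\end{flalign*}

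Next, observing that $p\leq 2n$ (this is the content of condition \eqref{con3.3}), the elementary inequality $\max_k a_k\leq\bigl(\sum_k a_k^{2n}\bigr)^{1/(2n)}$ together with $(a+b)^{2n}\leq 2^{2n-1}(a^{2n}+b^{2n})$ gives
\begin{flalign*}
    \sup_{t\in[0,T]}|\tilde{Y}(t)-Y(t)|^{p}\leq \Bigl(\sum_{k=0}^{N-1}\bigl(M_r\Delta+M_r Z_k\bigr)^{2n}\Bigr)^{p/(2n)},
\end{flalign*}
where $Z_k:=\sup_{t\in[t_k,t_{k+1})}|B(t)-B(t_k)|$. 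Taking expectation and applying Jensen's inequality (the function $x\mapsto x^{p/(2n)}$ is concave), then Doob's maximal inequality on the Brownian increment, yields
\begin{flalign*}
    \E Z_k^{2n}\leq \Bigl(\tfrac{2n}{2n-1}\Bigr)^{2n}\E|B(\Delta)|^{2n}\leq C_n\Bigl(\tfrac{2n}{2n-1}\Bigr)^{2n}\Delta^{n}.
\end{flalign*}
Substituting this bound, using $N=T/\Delta$ so that $\sum_{k=0}^{N-1}1=T/\Delta$, and collecting terms of leading order in $\Delta$ gives
\begin{flalign*}
    \E[\sup_{t\in[0,T]}|\tilde{Y}(t)-Y(t)|^{p}]\leq C_r\Bigl(\tfrac{2n}{2n-1}\Bigr)^{p}(T+1)^{p/(2n)}\Delta^{p(n-1)/(2n)}.
\end{flalign*}
Finally, applying hypothesis \eqref{con3.3} absorbs the prefactor into a constant (bounded by $2C_r$), yielding \eqref{as3.3}.

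The main obstacle is the bookkeeping in the last two steps: one has to be careful that the ``$\max\le \text{sum}$'' trick combined with Jensen produces the exponent $(n-1)/(2n)$ on $\Delta$ rather than $1/2$, and that the constant from Doob's inequality, raised to the power $p/(2n)$, is exactly $\bigl(\tfrac{2n}{2n-1}\bigr)^{p}$, which is precisely the factor appearing in \eqref{con3.3}. Everything else (bounds on $f_r$, $g_r$; the decomposition of $\tilde{Y}-Y$; Doob; Jensen) is standard once one commits to working in the $L^{2n}$ framework with $2n\ge p$ large.
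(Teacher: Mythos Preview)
Your proof is correct and follows essentially the same route as the paper's: both express $\tilde{Y}(t)-Y(t)$ explicitly on each subinterval, exploit the uniform boundedness of $f_r,g_r$, replace $\max_k$ by $\sum_k$ at the $L^{2n}$ level, and then apply Doob's maximal inequality to the Brownian increments to produce the exponent $p(n-1)/(2n)$. One small slip: the inequality $p\le 2n$ that you need for Jensen is not a consequence of \eqref{con3.3} but rather of the hypothesis that $n$ is ``sufficiently large''; apart from this, your argument and the paper's are the same up to whether the drift term is separated out before or after the $\max\le\text{sum}$ step.
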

	\indent This lemma is proved in \ref{Appendix11}, which is crucial for the proof of Theorem \ref{Thm4.1}.

	\subsection{The p-th moment convergence}
	
	\begin{theorem}\label{Thm4.1}
		Let the conditions in Lemma \ref{Lm2.1} hold. Then for $p> 0$, the PPTEM solution $X_\Delta(t)$ is strongly convergent to the exact solution $X(t)$, i.e.,
		\begin{equation*}
			\lim_{\Delta\rightarrow 0}\E[|X(T)-X_\Delta(T)|^p]=0.
		\end{equation*}
	\end{theorem}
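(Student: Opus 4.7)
The plan is a standard stopping-time decomposition that exploits the auxiliary processes $X^r$, $Y$, $\tilde Y$ already constructed in Section 4. Fix $\Delta\in(0,\Delta^*]$ and set $r:=\lfloor\phi^{-1}(h(\Delta))\rfloor$, which tends to infinity as $\Delta\to 0$ by \eqref{hD}. Define the ``good event''
\begin{flalign*}
    A_r:=\{\tau_r>T\}\cap\{\tilde{\xi}_\phi>T\}.
\end{flalign*}
On $A_r$ two things happen. First, $X(t)=X^r(t)$ for every $t\in[0,T]$ by \eqref{a.s.}. Second, an easy induction on $k$ shows that each PPTEM iterate satisfies $X_k=\tilde X_k\in[r^{-1},r]^d$, so the truncation mapping $\pi_\Delta$ acts as the identity and the truncated coefficients $f_r,g_r$ coincide with $f,g$ at these points; comparing with \eqref{Y_k} gives $Y_k=X_k$ for all $k$, hence $Y(T)=X_\Delta(T)$ on $A_r$. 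Thus on $A_r$ the error can be re-expressed entirely in terms of the auxiliary processes.

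I then split
\begin{flalign*}
    \E[|X(T)-X_\Delta(T)|^p] = \E\bigl[|X(T)-X_\Delta(T)|^p I_{A_r}\bigr] + \E\bigl[|X(T)-X_\Delta(T)|^p I_{A_r^c}\bigr].
\end{flalign*}
For the first summand, on $A_r$ I write $X(T)-X_\Delta(T)=\bigl(X(T\wedge\tau_r)-\tilde Y(T\wedge\tau_r)\bigr)+\bigl(\tilde Y(T)-Y(T)\bigr)$ and apply the elementary bound $(a+b)^p\leq C_p(a^p+b^p)$; then \eqref{tYX} with $p_0=p$ controls the first piece and Lemma \ref{Lemma4.1} controls the second, giving an overall bound of order $C_r\Delta^{p(n-1)/(2n)}$ for any admissible integer $n\geq 3$. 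For the second summand, I use H\"older's inequality with conjugate exponents $q/p$ and $q/(q-p)$, where $p<q\leq\bar p$ (which is legitimate provided $\bar p,\bar q$ in Assumption \ref{as.2} are taken large enough for the prescribed $p$); combining the uniform moment bounds of Lemma \ref{Lm2.1} and Corollary \ref{Corollary2} with the tail estimates $\PP(\tau_r\leq T)\leq C r^{-(\bar p\wedge\bar q)}$ and $\PP(\tilde{\xi}_\phi\leq T)\leq C(\phi^{-1}(h(\Delta)))^{-(\bar p\wedge\bar q)}$ yields a bound of order $r^{-(\bar p\wedge\bar q)(1-p/q)}$, which vanishes as $\Delta\to 0$.

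The principal difficulty is that the constant $C_r$ appearing in both \eqref{tYX} and Lemma \ref{Lemma4.1} grows polynomially in $r$, since it inherits the polynomial-in-$r$ Lipschitz and linear-growth constants of the truncated coefficients $f_r,g_r$, while $r=r(\Delta)$ itself already grows polynomially in $\Delta^{-1}$ by Remark \ref{Remark2}. To force the on-event bound $C_r\Delta^{p(n-1)/(2n)}$ to tend to zero, I must select the auxiliary integer $n$ in Lemma \ref{Lemma4.1} large enough that the exponent $p(n-1)/(2n)$ strictly exceeds the polynomial rate at which $C_r$ blows up in $\Delta^{-1}$. Because $p(n-1)/(2n)\to p/2$ as $n\to\infty$, this is achievable for every $p>0$, but at the cost that only qualitative convergence, and no explicit rate, is extracted at this level of generality; the sharp $1/2$ rate is recovered in the next subsection under additional structural assumptions.
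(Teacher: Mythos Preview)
Your decomposition has a genuine gap in the treatment of the on-event term. You assert that the constant $C_r$ in \eqref{tYX} ``grows polynomially in $r$, since it inherits the polynomial-in-$r$ Lipschitz and linear-growth constants of the truncated coefficients.'' This is not correct. The classical Euler--Maruyama mean-square convergence estimate for globally Lipschitz coefficients is obtained by a Gr\"onwall argument, and the Lipschitz constant $L_r$ of $f_r,g_r$ enters the final bound \emph{exponentially}: one has $C_r \sim M_r^{p_0}\,e^{C L_r^2 T}$ with $L_r$ of order $r^{\alpha\vee\beta}$ (from Assumption~\ref{as.1}) and $M_r$ polynomial in $r$ (from Remark~\ref{Remark1}). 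Since you couple $r=\lfloor\phi^{-1}(h(\Delta))\rfloor\sim\Delta^{-c}$ to the step size, this makes $C_r\sim\exp\bigl(C\Delta^{-2c(\alpha\vee\beta)}\bigr)$, which diverges faster than any negative power of $\Delta$. No choice of $n$ in Lemma~\ref{Lemma4.1} can rescue the product $C_r\Delta^{p(n-1)/(2n)}$, because the exponent $p(n-1)/(2n)$ is bounded above by $p/2$.

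The paper avoids this trap by \emph{decoupling} $r$ from $\Delta$. For a given tolerance $\epsilon>0$ it first fixes $r=r(\epsilon)$ so that $\PP(\gamma_r\le T)\le\epsilon/2$, and only then lets $\Delta\to 0$ with $r$ held constant. In that regime $C_r$ is a genuine constant (large, but independent of $\Delta$), so $C_r\Delta^{p/3}\to 0$. The identification $Y(t)=X_\Delta(t)$ on the good set is obtained not from $\{\tilde\xi_\phi>T\}$ but from the additional event $\Omega_2=\{\sup_t|\tilde Y(t\wedge\gamma_r)-X(t\wedge\gamma_r)|<\tfrac{1}{2r}\}$, whose probability is pushed above $1-\epsilon/2$ by taking $\Delta$ small relative to the (now fixed) $C_r$. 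The off-event term is then controlled by the uniform moment bounds and $\PP(\Omega_0^c)\le\epsilon$, and the arbitrariness of $\epsilon$ delivers the qualitative convergence. Your architecture can be repaired by adopting this two-scale $\epsilon$--$\Delta$ ordering; as written, the coupling $r=r(\Delta)$ makes the on-event bound diverge. (There is also a minor mismatch: with $r=\lfloor\phi^{-1}(h(\Delta))\rfloor\le\phi^{-1}(h(\Delta))$, the inclusion $\tilde X_k\in\bigl((\phi^{-1}(h(\Delta)))^{-1},\phi^{-1}(h(\Delta))\bigr)^d$ on $\{\tilde\xi_\phi>T\}$ does \emph{not} guarantee $\theta_r(\tilde X_k)=\tilde X_k$, so the induction $Y_k=X_k$ needs a slightly larger $r$; but this is secondary to the exponential blow-up above.)
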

	\begin{proof}
		For an arbitrary $\epsilon>0$, we derive from Lemma \ref{Lm2.1} that there exists $r=r(\epsilon)$ sufficiently large such that $\PP(\gamma_r\leq T)\leq \frac{\epsilon}{2}$. Setting $\Omega_1 = \{\gamma_r>T\}=\{\frac{1}{r}<X_i(t)<r \hspace{0.2cm} \text{for}\hspace{0.2cm} \text{all} \hspace{0.2cm} 1\leq i\leq n\hspace{0.2cm} \text{and} \hspace{0.2cm}0\leq t\leq T\}$,
		we have 
		\begin{flalign}\label{Omega1}
			\PP(\Omega_1)\geq1-\frac{\epsilon}{2}.
		\end{flalign}
		With this $r$, we define $\Omega_2=\Big\{\sup_{ t \in[0,T]}|\tilde{Y}(t\wedge\gamma_r)-X(t\wedge\gamma_r)|<\frac{1}{2r}\Big\}$. By virtue of the Chebyshev inequality to \eqref{p_0}, we derive
		\begin{flalign*}
			\PP(\Omega_2^c)=\PP\Big(\sup_{ t \in[0,T]}|\tilde{Y}(t\wedge\gamma_r)-X(t\wedge\gamma_r)|\geq\frac{1}{2r}\Big)\leq C_r\Delta^\frac{p_0}{2},\quad \forall \Delta\in(0,1).
		\end{flalign*}
		Then one can derive that there exists $\Delta_1=\Delta_1(\epsilon)\in(0,1)$ sufficiently small such that
		\begin{flalign}\label{Omega2}
			\PP(\Omega_2)\geq 1-\frac{\epsilon}{2},\quad\forall\Delta\in(0,\Delta_1].
		\end{flalign}
		Let $\Omega_0=\Omega_1\cap\Omega_2$. Then it follows from \eqref{Omega1} and \eqref{Omega2} that
		\begin{flalign}\label{Omega0}
			\PP(\Omega_0)\geq 1-\epsilon, \quad\forall\Delta\in(0,\Delta_1].
		\end{flalign}
		For any $\omega\in\Omega_0$ and $\Delta\in(0,\Delta_1]$, we get
		\begin{flalign*}
			\sup_{ t \in[0,T]}Y_i(t)\leq \sup_{ t \in[0,T]}\tilde{Y}_i(t)\leq&\sup_{ t \in[0,T]}X_i(t)+\sup_{ t \in[0,T]}|\tilde{Y}_i(t)-X_i(t)|
			<r+\sup_{ t \in[0,T]}|\tilde{Y}(t)-X(t)|<r+\frac{1}{2r}
		\end{flalign*}
		and
		\begin{flalign*}
			\inf_{t \in[0,T]}Y_i(t)\geq\inf_{t \in[0,T]}\tilde{Y}_i(t)\geq&\inf_{ t \in[0,T]}X_i(t)-\sup_{ t \in[0,T]}|X_i(t)-\tilde{Y}_i(t)|\\
			>&\inf_{ t \in[0,T]}X_i(t)-\sup_{ t \in[0,T]}|X(t)-\tilde{Y}(t)|>\frac{1}{r}-\frac{1}{2r}=\frac{1}{2r}.
		\end{flalign*}
		Then there exists $\Delta_0\in(0,\Delta_1]$ sufficiently small such that $\phi^{-1}(h(\Delta_0))\geq 2r$, which means that 
		\begin{flalign*}
			\frac{1}{\phi^{-1}(h(\Delta_0))}<\inf_{ t \in[0,T]}Y_i(t)\leq\sup_{ t \in[0,T]}Y_i(t)<\phi^{-1}(h(\Delta_0)).
		\end{flalign*}
		Therefore, for any $\omega\in\Omega_0$ and $\Delta\in(0,\Delta_0]$, we have
		\begin{flalign}\label{YtX}
			Y(t)=\tilde{X}(t)=X_\Delta(t),\quad \forall t\in[0,T].
		\end{flalign}
		\indent Next we show the deserved assertion. For any $\Delta\in(0,\Delta_0]$ and $\bar{p}>p$,
		\begin{flalign}\label{maineq}
			&\E[|X_\Delta(T)-X(T)|^p]=\E[|X_\Delta(T)-X(T)|^pI_{\Omega_0}]+\E[|X_\Delta(T)-X(T)|^pI_{\Omega_0^c}]\nonumber\\
			\leq&\E\big[\sup_{ t \in[0,T]}|X_\Delta(t)-X(t)|^pI_{\Omega_0}\big]+\big(\PP(\Omega_0^c)\big)^{\frac{\bar{p}-p}{\bar{p}}}\big(\E[|X_\Delta(T)-X(T)|^{\bar{p}}]\big)^{\frac{p}{\bar{p}}}.
		\end{flalign}
		By using \eqref{tYX}, \eqref{YtX} and Lemma \ref{Lemma4.1}, we can derive
		\begin{flalign}\label{Esup}
			&\E\big[\sup_{ t \in[0,T]}|X_\Delta(t)-X(t)|^pI_{\Omega_0}\big]=\E\big[\sup_{ t \in[0,T]}|Y(t)-X(t)|^pI_{\Omega_0}\big]\nonumber\\
			\leq&\E\big[\sup_{ t \in[0,T]}|Y(t)-\tilde{Y}(t)|^p\big]+\E\big[\sup_{ t \in[0,T]}|\tilde{Y}(t)-X(t)|^pI_{\Omega_1}\big]\nonumber\\
			\leq&\E\big[\sup_{ t \in[0,T]}|Y(t)-\tilde{Y}(t)|^p\big]+\E\big[\sup_{ t \in[0,T]}|\tilde{Y}(t\wedge\gamma_r)-X(t\wedge\gamma_r)|^p\big]\nonumber\\
			\leq&C_r\Delta^\frac{p}{3},\quad \forall \Delta\in(0,\Delta_0].
		\end{flalign}
		Besides, Lemmas \ref{Lm2.1} and \ref{pp numerical integral} indicate
		\begin{flalign}\label{EEC}
			\big(\E[|X_\Delta(T)-X(T)|^{\bar{p}}]\big)^{\frac{p}{\bar{p}}}\leq 2^{\bar{p}}\big(\E[|X_\Delta(T)|^{\bar{p}}+|X(T)|^{\bar{p}}]\big)^{\frac{p}{\bar{p}}}\leq C.
		\end{flalign}
		Substituting \eqref{Omega0}, \eqref{Esup} and \eqref{EEC} to \eqref{maineq} leads to
		\begin{flalign*}
			\E[|X_\Delta(T)-X(T)|^p]\leq C_r\Delta^\frac{p}{3}+C\epsilon^{\frac{\bar{p}-p}{\bar{p}}},\quad\forall \Delta\in(0,\Delta_0].
		\end{flalign*}
		Finally,
		\begin{flalign*}
			\lim_{\Delta \rightarrow 0}\E[|X_\Delta(T)-X(T)|^p]\leq C\epsilon^{\frac{\bar{p}-p}{\bar{p}}}.
		\end{flalign*}
		By the arbitrariness of $\epsilon$, the desired assertion holds.\hfill
	\end{proof}
	
	\subsection{Strong convergence rate}
	\indent To give a strong convergence rate, we impose the following additional condition on $f$ and $g$. 
	\begin{assumption}\label{as.3}
		Assume that there exist positive constants $p>2$ and $K_3$ such that 
		\begin{flalign*}
			\langle x-y, f(x)-f(y) \rangle+\frac{p-1}{2}| g(x)-g(y) |^2\leq K_3| x-y|^2, \quad \forall x,y\in \RR^d_+.
		\end{flalign*}
	\end{assumption}
	\par Before presenting the result on the strong convergence rate, we first rewrite the truncated Euler-Maruyama (TEM) method $\Psi$ as a one-step scheme
	\begin{equation}\label{one-step}
		\Psi (X,\Delta) = \pi_\Delta(X) +f_\Delta(X)\Delta+g_\Delta(X)\Delta B_k, 
	\end{equation}
	which approximates $X(t+\Delta)$. Here, $X\in L^2(\Omega,\mathcal{F}_t,\PP)$, $\Psi (X,\Delta)\in L^2(\Omega,\mathcal{F}_{t+\Delta},\PP)$, and the truncated functions defined by
	\begin{equation*}
		f_\Delta(X)=f(\pi_\Delta(X)) \quad \text{and} \quad g_\Delta(X)=g(\pi_\Delta(X)).
	\end{equation*}
	By virtue of the stochastic C-stable and B-consistent framework, our ultimate goal is to establish an optimal strong convergence order 1/2. To this end, we outline the definitions of stochastic C-stability and B-consistency below (for further details, see \cite{pj2016} and \cite{pj2017}). By applying these techniques, we derive Proposition \ref{Pro 1}, which is critical for obtaining the strong convergence rate.
	\begin{definition}\label{Def1}(Definition 3.2 in \cite{pj2016})
		A stochastic one-step method $(\Psi,\Delta)$ is called stochastically C-stable if there exist a constant $C_{\mathrm{stab}}$ and a parameter value $\mu \in (1,\infty)$ such that for all $t \in [0,T]$ and all random variables $Y, Z \in L^2(\Omega,\mathcal{F}_t,\PP)$, it holds that
		\begin{flalign}\label{Cstab}
			&\big\| \E \big[ \Psi(Y,\Delta)-\Psi(Z,\Delta) | \mathcal{F}_{t} \big]\big\|_{L^2(\Omega;\RR^d)}^2+ \mu \big\| \big( \id - \E [ \, \cdot \, |\mathcal{F}_{t} ] \big) \big(\Psi(Y,\Delta)-\Psi(Z,\Delta) \big) \big\|^2_{L^2(\Omega;\RR^d)}\nonumber\\
			\leq& \big(1 + C_{\mathrm{stab}} \Delta \big)\big\| Y - Z \big\|_{L^2(\Omega;\RR^d)}^2.    
		\end{flalign}
	\end{definition}
	\begin{definition}(Definition 3.3 in \cite{pj2016}) \label{Def2}
		A stochastic one-step method $(\Psi,\Delta)$ is called stochastically B-consistent of order $\gamma_0 > 0$ to \eqref{sde} if there exists a constant $C_{\mathrm{cons}}$ such that for all $t \in [0,T]$, it holds that
		\begin{flalign}\label{bcon1}
			\begin{split}
				\big\| \E \big[ X(t+\Delta) - \Psi(X(t),\Delta) | \mathcal{F}_{t} \big] 
				\big\|_{L^2(\Omega;\RR^d)} \le C_{\mathrm{cons}} \Delta^{ \gamma_0 + 1}
			\end{split}
		\end{flalign}
		and
		\begin{align}\label{bcon2}
			\begin{split}
				\big\| \big( \id - \E [ \, \cdot \, | \mathcal{F}_{t} ] \big)\big( X(t + \Delta) - \Psi(X(t),\Delta) \big)\big\|_{L^2(\Omega;\RR^d)} \leq C_{\mathrm{cons}} \Delta^{ \gamma_0 +\frac{1}{2}},
			\end{split}
		\end{align}
		where $X(t)$ denotes the exact solution to \eqref{sde}.
	\end{definition}
	\par We will focus on the strong convergence rate of the PPTEM method. To proceed, we first establish the following lemmas for later use.  
	\begin{lemma}\label{Lemma4.2}
		Let Assumptions \ref{as.1} and \ref{as.3} hold. Then for all $\Delta\in(0,1)$ and $x,y\in \RR^d_+$, we have
		\begin{equation}\label{eq piD}
			|\pi_\Delta(x)-\pi_\Delta(y)+(f_\Delta(x)-f_\Delta(y))\Delta|^2+\mu\Delta|g_\Delta(x)-g_\Delta(y)|^2\leq(1+C\Delta)|x-y|^2.
		\end{equation}
	\end{lemma}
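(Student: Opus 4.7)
The key structural observation is that $\pi_\Delta$ acts componentwise as projection onto the closed interval $[R_\Delta^{-1},R_\Delta]$ with $R_\Delta:=\phi^{-1}(h(\Delta))$, so it is a contraction: for each component $|(\pi_\Delta(x))_i-(\pi_\Delta(y))_i|\le |x_i-y_i|$, and hence $|\pi_\Delta(x)-\pi_\Delta(y)|\le|x-y|$. Moreover, $\pi_\Delta(x),\pi_\Delta(y)\in\RR^d_+$, so Assumption \ref{as.3} applies directly with arguments $\pi_\Delta(x),\pi_\Delta(y)$ in place of $x,y$. I will choose the parameter $\mu\in(1,p-1]$ (possible since $p>2$), and this will be the $\mu$ in the C-stability definition.

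\textbf{Step 1: expand the square.} Writing $\Delta\xi:=\pi_\Delta(x)-\pi_\Delta(y)$, $\Delta f:=f_\Delta(x)-f_\Delta(y)$, $\Delta g:=g_\Delta(x)-g_\Delta(y)$, the left-hand side of \eqref{eq piD} equals
\begin{equation*}
|\Delta\xi|^2+2\Delta\langle\Delta\xi,\Delta f\rangle+\Delta^2|\Delta f|^2+\mu\Delta|\Delta g|^2.
\end{equation*}

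\textbf{Step 2: absorb the cross term and the $|\Delta g|^2$ term via Assumption \ref{as.3}.} Since $\pi_\Delta(x),\pi_\Delta(y)\in\RR^d_+$ and $f_\Delta(x)=f(\pi_\Delta(x))$, $g_\Delta(x)=g(\pi_\Delta(x))$, Assumption \ref{as.3} with our choice of $\mu\le p-1$ gives
\begin{equation*}
2\Delta\langle\Delta\xi,\Delta f\rangle+\mu\Delta|\Delta g|^2\;\le\;2\Delta\langle\Delta\xi,\Delta f\rangle+(p-1)\Delta|\Delta g|^2\;\le\;2K_3\Delta|\Delta\xi|^2.
\end{equation*}

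\textbf{Step 3: control the deterministic remainder $\Delta^2|\Delta f|^2$.} This is the main technical step. From Assumption \ref{as.1} applied to the truncated arguments and the fact that every component of $\pi_\Delta(x),\pi_\Delta(y)$ lies in $[R_\Delta^{-1},R_\Delta]$, I get
\begin{equation*}
|\Delta f|\le K_1\bigl(1+|\pi_\Delta(x)|^\alpha+|\pi_\Delta(y)|^\alpha+|\pi_\Delta(x)|^{-\beta}+|\pi_\Delta(y)|^{-\beta}\bigr)|\Delta\xi|\le C R_\Delta^{\alpha\vee\beta}|\Delta\xi|,
\end{equation*}
using $|\pi_\Delta(\cdot)|\le\sqrt d\,R_\Delta$ and $|\pi_\Delta(\cdot)|\ge\sqrt d\,R_\Delta^{-1}$. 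By the definition $\phi(R)=2H_0R^{\alpha\vee(\beta+1)}$ and the stepsize condition \eqref{hD}, one has $R_\Delta^{\alpha\vee(\beta+1)}=h(\Delta)/(2H_0)\le C\Delta^{-1/2}$, hence $R_\Delta^{2(\alpha\vee\beta)}\le R_\Delta^{2(\alpha\vee(\beta+1))}\le C\Delta^{-1}$. Therefore
\begin{equation*}
\Delta^2|\Delta f|^2\;\le\;C\Delta^2 R_\Delta^{2(\alpha\vee\beta)}|\Delta\xi|^2\;\le\;C\Delta|\Delta\xi|^2.
\end{equation*}

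\textbf{Step 4: combine and use contractivity of $\pi_\Delta$.} Adding Steps 2 and 3 to $|\Delta\xi|^2$ yields
\begin{equation*}
\text{LHS of \eqref{eq piD}}\le(1+2K_3\Delta+C\Delta)|\Delta\xi|^2\le(1+C\Delta)|\pi_\Delta(x)-\pi_\Delta(y)|^2\le(1+C\Delta)|x-y|^2,
\end{equation*}
where the last inequality uses the componentwise contraction property of the clipping map $\pi_\Delta$.

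\textbf{Anticipated difficulty.} The only nontrivial point is Step 3, where one must verify that the exponent $2(\alpha\vee\beta)$ appearing from Assumption \ref{as.1} is dominated by $2(\alpha\vee(\beta+1))$ from the definition of $\phi$, so that the $\Delta^{-1}$ blowup in $R_\Delta^{2(\alpha\vee\beta)}$ is exactly compensated by the prefactor $\Delta^2$, leaving an admissible $\mathcal O(\Delta)$ term. The rest is routine expansion and invocation of the two assumptions.
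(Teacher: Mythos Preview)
Your proposal is correct and follows essentially the same route as the paper's proof: expand the square, use Assumption~\ref{as.3} (with $\mu\le p-1$) to absorb the cross term and the $|\Delta g|^2$ term, bound $\Delta^2|\Delta f|^2$ via Assumption~\ref{as.1} together with the growth restriction on $R_\Delta=\phi^{-1}(h(\Delta))$, and finish with the componentwise contraction property of $\pi_\Delta$. Your Step~3 is in fact slightly cleaner than the paper's version, since you invoke $R_\Delta^{\alpha\vee(\beta+1)}=h(\Delta)/(2H_0)\le C\Delta^{-1/2}$ directly from \eqref{hD} instead of splitting into the cases $\alpha\ge\beta+1$ and $\alpha<\beta+1$ as the paper does; and your Step~4 simply states the (standard) $1$-Lipschitz property of the interval projection, whereas the paper writes it out as an induction on $d$.
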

	
	\indent This lemma is proved in \ref{Appendix12}.
	
	\begin{lemma}\label{Lemma4.3}
		Let the conditions in Lemma \ref{Lm2.1} hold. Then for any $\Delta\in(0,1)$, any $p\geq2$, and any $0\leq t\leq T$, we have
		\begin{flalign}\label{eq ED}
			\E[|X(t)-X(t_k)|^p]\leq C\Delta^\frac{p}{2},
		\end{flalign}
		where $t_k=\lfloor \frac{t}{\Delta} \rfloor\Delta$.
	\end{lemma}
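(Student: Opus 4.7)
The plan is to start from the integral form of the SDE on the subinterval $[t_k,t]$, namely
\begin{flalign*}
X(t)-X(t_k)=\int_{t_k}^t f(X(s))\,\mathrm{d}s+\int_{t_k}^t g(X(s))\,\mathrm{d}B(s),
\end{flalign*}
and then split the $p$-th moment into a drift part and a diffusion part via the elementary inequality $|a+b|^p\le 2^{p-1}(|a|^p+|b|^p)$. For the drift term I would apply H\"older's inequality in time to get $\big|\int_{t_k}^t f(X(s))\,\mathrm{d}s\big|^p\le \Delta^{p-1}\int_{t_k}^t|f(X(s))|^p\,\mathrm{d}s$, while for the stochastic integral I would use the Burkholder--Davis--Gundy inequality and then H\"older to obtain $\E\big|\int_{t_k}^t g(X(s))\,\mathrm{d}B(s)\big|^p\le C_p\Delta^{p/2-1}\int_{t_k}^t\E[|g(X(s))|^p]\,\mathrm{d}s$.

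The next step is to control $\E[|f(X(s))|^p]$ and $\E[|g(X(s))|^p]$ uniformly in $s$. By Remark~\ref{Remark1} we have the pointwise bound $|f(x)|\vee|g(x)|\le C(1+|x|^{\alpha+1}+|x|^{-\beta})$ for all $x\in\RR^d_+$. Raising to the $p$-th power and taking expectations produces the requirement
\begin{flalign*}
\E[|f(X(s))|^p]+\E[|g(X(s))|^p]\le C\big(1+\E[|X(s)|^{p(\alpha+1)}]+\E[|X(s)|^{-p\beta}]\big),
\end{flalign*}
so the two ingredients I need are the moment bound $\sup_{s\in[0,T]}\E[|X(s)|^{p(\alpha+1)}]\le C$ and the inverse moment bound $\sup_{s\in[0,T]}\E[|X(s)|^{-p\beta}]\le C$, both of which are delivered by Lemma~\ref{Lm2.1} provided $\bar p\ge p(\alpha+1)$ and $\bar q\ge p\beta$ (an implicit standing requirement, matching the hypothesis that the conditions of Lemma~\ref{Lm2.1} hold with $\bar p,\bar q$ large enough for the present $p$).

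Combining the two pieces yields
\begin{flalign*}
\E[|X(t)-X(t_k)|^p]\le C\Delta^{p-1}\cdot\Delta\cdot\sup_{s}\E[|f(X(s))|^p]+C\Delta^{p/2-1}\cdot\Delta\cdot\sup_{s}\E[|g(X(s))|^p]\le C(\Delta^p+\Delta^{p/2}),
\end{flalign*}
and since $\Delta\in(0,1)$ the dominant term is $\Delta^{p/2}$, giving the claimed bound. The only subtle point is verifying that the moment orders $p(\alpha+1)$ and $p\beta$ are indeed covered by the hypotheses of Lemma~\ref{Lm2.1}; this is the main conceptual obstacle, and it is the reason the lemma is stated with the blanket hypothesis ``the conditions in Lemma~\ref{Lm2.1} hold'' (so that $\bar p,\bar q$ may be chosen as large as needed for a given $p$). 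All the remaining manipulations are the standard H\"older/BDG routine and do not present any real difficulty.
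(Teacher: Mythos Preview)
Your proposal is correct and follows essentially the same route as the paper's proof: write $X(t)-X(t_k)$ in integral form, bound the drift via H\"older and the stochastic integral via a BDG-type moment inequality, then invoke Remark~\ref{Remark1} together with the moment and inverse-moment bounds of Lemma~\ref{Lm2.1} to control $\E[|f(X(s))|^p]$ and $\E[|g(X(s))|^p]$ uniformly. Your explicit remark that this requires $\bar p\ge p(\alpha+1)$ and $\bar q\ge p\beta$ is a point the paper leaves implicit.
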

	
	\indent This lemma is proved in \ref{Appendix13}.
	
	\indent The next two theorems establish the results on stochastic C-stability and B-consistency for the TEM method, respectively.
	
	\begin{theorem}\label{Thm4.2}
		Let Assumptions \ref{as.1} and \ref{as.3} hold. Then for any step size $\Delta\in(0,1)$, the TEM method is stochastically C-stable. 
	\end{theorem}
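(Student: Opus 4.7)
The plan is to reduce the C-stability bound to Lemma \ref{Lemma4.2} by exploiting the martingale structure of the Brownian increment. Specifically, I would decompose $\Psi(Y,\Delta)-\Psi(Z,\Delta)$ into an $\mathcal{F}_t$-measurable drift-plus-projection part and a driftless stochastic part, identify these with the two terms on the left-hand side of \eqref{Cstab}, and then apply Lemma \ref{Lemma4.2} $\omega$-wise before integrating.

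Concretely, since $\pi_\Delta(Y), \pi_\Delta(Z), f_\Delta(Y), f_\Delta(Z), g_\Delta(Y), g_\Delta(Z)$ are all $\mathcal{F}_t$-measurable while $\Delta B_k = B(t+\Delta)-B(t)$ is independent of $\mathcal{F}_t$ and has mean zero, the one-step representation \eqref{one-step} will yield
\begin{flalign*}
\E[\Psi(Y,\Delta)-\Psi(Z,\Delta)\,|\,\mathcal{F}_t]&=\pi_\Delta(Y)-\pi_\Delta(Z)+(f_\Delta(Y)-f_\Delta(Z))\Delta,\\
\big(\id-\E[\cdot\,|\,\mathcal{F}_t]\big)(\Psi(Y,\Delta)-\Psi(Z,\Delta))&=(g_\Delta(Y)-g_\Delta(Z))\Delta B_k.
\end{flalign*}
Using the independence of $\Delta B_k$ from $\mathcal{F}_t$ together with $\E[\Delta B_k (\Delta B_k)^T]=\Delta I_m$, the standard conditional It\^o-isometry type identity $\E[|G\Delta B_k|^2\,|\,\mathcal{F}_t]=\Delta|G|^2$ (trace norm) for any $\mathcal{F}_t$-measurable $\RR^{d\times m}$-valued $G$ gives
\begin{flalign*}
\big\|(g_\Delta(Y)-g_\Delta(Z))\Delta B_k\big\|_{L^2(\Omega;\RR^d)}^2=\Delta\,\E[|g_\Delta(Y)-g_\Delta(Z)|^2].
\end{flalign*}

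Next, applying Lemma \ref{Lemma4.2} pointwise with $x=Y(\omega)$ and $y=Z(\omega)$ and then taking expectations produces
\begin{flalign*}
\E\big[|\pi_\Delta(Y)-\pi_\Delta(Z)+(f_\Delta(Y)-f_\Delta(Z))\Delta|^2\big]+\mu\Delta\,\E[|g_\Delta(Y)-g_\Delta(Z)|^2]\leq(1+C\Delta)\,\E[|Y-Z|^2].
\end{flalign*}
Combining this with the two conditional-expectation identifications above yields exactly \eqref{Cstab} with $C_{\mathrm{stab}}=C$ and the same $\mu\in(1,\infty)$ furnished by Lemma \ref{Lemma4.2}.

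I do not expect a genuine obstacle here: the entire analytic content resides in Lemma \ref{Lemma4.2}, and the rest is a short exercise in stochastic bookkeeping. The only small point worth verifying is that Lemma \ref{Lemma4.2}, stated for $x,y\in\RR^d_+$, can be invoked for general random variables $Y,Z\in L^2(\Omega,\mathcal{F}_t,\PP)$; this is harmless because $f_\Delta$ and $g_\Delta$ depend on their arguments only through $\pi_\Delta(\cdot)\in\RR^d_+$, and the component-wise projection $\pi_\Delta$ onto its bounded box is $1$-Lipschitz, so $|\pi_\Delta(Y)-\pi_\Delta(Z)|\leq|Y-Z|$ almost surely.
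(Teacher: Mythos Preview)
Your proposal is correct and follows essentially the same route as the paper: decompose $\Psi(Y,\Delta)-\Psi(Z,\Delta)$ into the $\mathcal{F}_t$-measurable part $\pi_\Delta(Y)-\pi_\Delta(Z)+(f_\Delta(Y)-f_\Delta(Z))\Delta$ and the orthogonal part $(g_\Delta(Y)-g_\Delta(Z))\Delta B_k$, apply the It\^o isometry to the latter, and then invoke Lemma~\ref{Lemma4.2} inside the expectation. Your additional remark that Lemma~\ref{Lemma4.2} extends to arbitrary $Y,Z\in L^2(\Omega,\mathcal{F}_t,\PP)$ via the $1$-Lipschitz property of $\pi_\Delta$ and the identity $f_\Delta=f\circ\pi_\Delta$ is a minor clarification the paper leaves implicit.
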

	\begin{proof}
		Recalling Definition \ref{Def1}, for the TEM method \eqref{one-step}, we have 
		\begin{equation*}
			\E \big[ \Psi(Y,\Delta)-\Psi(Z,\Delta) | \mathcal{F}_{t} \big]=\pi_\Delta(Y)-\pi_\Delta(Z)+(f_\Delta(Y)-f_\Delta(Z))\Delta
		\end{equation*}
		and
		\begin{equation*}
			\big( \id - \E [ \, \cdot \, |\mathcal{F}_{t} ] \big) \big(\Psi(Y,\Delta)-\Psi(Z,\Delta) \big)=(g_\Delta(Y)-g_\Delta(Z))\Delta B_k.
		\end{equation*}
		Then from the It\^o isometry and Lemma \ref{Lemma4.2}, it follows that
		\begin{align*}
			\begin{split}
				&\big\|\pi_\Delta(Y)-\pi_\Delta(Z)+(f_\Delta(Y)-f_\Delta(Z))\Delta\big\|_{L^2(\Omega,\RR^d)}^2+ \mu \big\|  (g_\Delta(Y)-g_\Delta(Z))\Delta B_k \big\|^2_{L^2(\Omega,\RR^d)}\\
				=&\E\Big[|\pi_\Delta(Y)-\pi_\Delta(Z)+(f_\Delta(Y)-f_\Delta(Z))\Delta|^2+\mu\Delta|g_\Delta(Y)-g_\Delta(Z)|^2\Big]\\
				\leq&(1+C\Delta)\E[|Y-Z|^2]= (1 + C\Delta)\|Y - Z\|_{L^2(\Omega,\RR^d)}^2,  
			\end{split}
		\end{align*}
		which validates the equation \eqref{Cstab}. \hfill
	\end{proof}
	\begin{theorem}\label{Thm4.3}
		Let Assumptions \ref{as.1}, \ref{as.2}, and \ref{as.3} hold with $\alpha\vee(\beta+1)\leq \bar{p}+\bar{q}$. Then for any step size $\Delta\in(0,1)$, the TEM method is stochastically B-consistent of order $\gamma_0=\frac{1}{2}$. 
	\end{theorem}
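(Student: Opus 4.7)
The plan is to write
\begin{flalign*}
X(t+\Delta) - \Psi(X(t),\Delta) = \big[X(t) - \pi_\Delta(X(t))\big] + \int_t^{t+\Delta}\big(f(X(s)) - f_\Delta(X(t))\big)\,\mathrm{d}s + \int_t^{t+\Delta}\big(g(X(s)) - g_\Delta(X(t))\big)\,\mathrm{d}B(s),
\end{flalign*}
and then split this across the orthogonal decomposition $\id = \E[\cdot\mid\mathcal{F}_t] + (\id - \E[\cdot\mid\mathcal{F}_t])$. The conditional expectation kills the stochastic integral, while the orthogonal part kills the deterministic-leading term $X(t)-\pi_\Delta(X(t))$. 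I will verify \eqref{bcon1} and \eqref{bcon2} by bounding the four resulting pieces separately.

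For the drift estimate \eqref{bcon1}, the two relevant pieces are $\|X(t)-\pi_\Delta(X(t))\|_{L^2(\Omega;\RR^d)}$ and $\int_t^{t+\Delta}\|f(X(s))-f_\Delta(X(t))\|_{L^2(\Omega;\RR^d)}\,\mathrm{d}s$. The first is a pure truncation error: componentwise, $X_i(t)-\pi_\Delta(X(t))_i$ is supported on $\{X_i(t)>R\}\cup\{X_i(t)<R^{-1}\}$ with $R:=\phi^{-1}(h(\Delta))$, so Markov's inequality against the moment and inverse-moment bounds of Lemma \ref{Lm2.1} yields $\E[|X(t)-\pi_\Delta(X(t))|^2] \leq C\bigl(R^{2-\bar{p}}+R^{-\bar{q}-2}\bigr)$, and the lower bound $R\geq \hat K_0\Delta^{-\bar{k}/(\alpha\vee(\beta+1))}$ from Remark \ref{Remark2} (combined with $\alpha\vee(\beta+1)\leq \bar{p}+\bar{q}$ and a suitable choice of $\bar{k}$) upgrades this to $C\Delta^{3}$. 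For the integral term, split $f(X(s))-f_\Delta(X(t))=\bigl(f(X(s))-f(X(t))\bigr)+\bigl(f(X(t))-f(\pi_\Delta(X(t)))\bigr)$: Assumption \ref{as.1} together with Hölder's inequality, the moment bounds of Lemma \ref{Lm2.1} and the increment estimate of Lemma \ref{Lemma4.3} gives $\|f(X(s))-f(X(t))\|_{L^2(\Omega;\RR^d)}\leq C\Delta^{1/2}$; the second summand is a truncation error controlled exactly as above. Integrating over $[t,t+\Delta]$ produces a factor $\Delta$, yielding the target $C_{\mathrm{cons}}\Delta^{3/2}$.

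For the martingale estimate \eqref{bcon2}, Itô's isometry gives
\begin{flalign*}
\Bigl\|\int_t^{t+\Delta}\bigl(g(X(s))-g_\Delta(X(t))\bigr)\,\mathrm{d}B(s)\Bigr\|_{L^2(\Omega;\RR^d)}^2 = \int_t^{t+\Delta}\|g(X(s))-g_\Delta(X(t))\|_{L^2(\Omega;\RR^{d\times m})}^2\,\mathrm{d}s,
\end{flalign*}
and the same split-and-bound argument used for $f$ (now with the diffusion half of Assumption \ref{as.1}) yields an integrand of size $C\Delta$, hence a bound of $C\Delta^{2}$ on the square, i.e.\ $C\Delta$ on the norm. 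The deterministic piece $\int_t^{t+\Delta}(f(X(s))-\E[f(X(s))\mid\mathcal{F}_t])\,\mathrm{d}s$ is controlled by Jensen's inequality and the $L^2$-contractivity of conditional expectation, reducing it to $\int_t^{t+\Delta}\|f(X(s))-f(X(t))\|_{L^2(\Omega;\RR^d)}\,\mathrm{d}s\leq C\Delta^{3/2}$, which is negligible compared with the stochastic-integral contribution. Together these give the required $C_{\mathrm{cons}}\Delta^{1}=C_{\mathrm{cons}}\Delta^{\gamma_0+1/2}$.

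The main obstacle is the truncation term $\|X(t)-\pi_\Delta(X(t))\|_{L^2(\Omega;\RR^d)}$: this has no analogue in the classical (non-positivity-preserving) analysis, and to squeeze it down to $\Delta^{3/2}$ one must couple both the polynomial moment bound and the inverse moment bound of Lemma \ref{Lm2.1} against the two-sided tail of the truncation, then feed this into the lower bound on $\phi^{-1}(h(\Delta))$. This is where the hypothesis $\alpha\vee(\beta+1)\leq\bar{p}+\bar{q}$ is consumed; everything else is a standard superlinear-coefficient argument using Assumption \ref{as.1}, Lemma \ref{Lemma4.3}, and the moment bounds.
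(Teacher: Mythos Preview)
Your proposal is correct and follows essentially the same approach as the paper: the same decomposition, the same tools (Assumption~\ref{as.1} plus H\"older plus Lemma~\ref{Lemma4.3} for the increment terms; the moment and inverse-moment bounds of Lemma~\ref{Lm2.1} for the truncation tails; It\^o isometry for the stochastic integral). The only cosmetic difference is that the paper routes the truncation estimate $\|X(t)-\pi_\Delta(X(t))\|_{L^2}$ through the stopping-time probability $\PP(\gamma_\phi\le T)\le C R^{-(\bar p\wedge\bar q)}$ from Lemma~\ref{Lm2.1}, whereas you use a direct componentwise Markov bound giving $C(R^{2-\bar p}+R^{-\bar q-2})$---both arrive at the same order once $\bar p,\bar q$ are taken large enough (the paper spells out e.g.\ $\bar p=\bar q=6(\alpha\vee(\beta+1))+2$ for this piece).
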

	\begin{proof}
		It follows from \eqref{one-step} that
		\begin{flalign*}
			&X(t_k+\Delta) - \Psi(X(t_k),\Delta)\\
			=&X(t_k)-\pi_\Delta(X(t_k))+\int_{t_k}^{t_{k+1}}f(X(s))-f_\Delta(X(t_k))\mathrm{d}s+\int_{t_k}^{t_{k+1}}g(X(s))-g_\Delta(X(t_k))\mathrm{d}B(s)\\
			=&X(t_k)-\pi_\Delta(X(t_k))+\big(f(X(t_k))-f_\Delta(X(t_k))\big)\Delta+\big(g(X(t_k))-g_\Delta(X(t_k))\big)\Delta B_k\\
			&+\int_{t_k}^{t_{k+1}}f(X(s))-f(X(t_k))\mathrm{d}s+\int_{t_k}^{t_{k+1}}g(X(s))-g(X(t_k))\mathrm{d}B(s).
		\end{flalign*}
		Then we estimate \eqref{bcon1} as follows
		\begin{flalign}\label{bI}
			&\big\| \E \big[X(t_k+\Delta)-\Psi(X(t_k),\Delta)| \mathcal{F}_{t_k} \big] \big\|_{L^2(\Omega,\RR^d)}\nonumber\\ 
			\leq& \big\|X(t_k)-\pi_\Delta(X(t_k)) \big\|_{L^2(\Omega,\RR^d)}+\big\|f(X(t_k))-f_\Delta(X(t_k)) \big\|_{L^2(\Omega,\RR^d)}\Delta\nonumber\\
			&+\int_{t_k}^{t_{k+1}}\big\| \E \big[f(X(s))-f(X(t_k)) | \mathcal{F}_{t_k} \big] \big\|_{L^2(\Omega,\RR^d)}\mathrm{d}s\nonumber \\
			:=&\bar{I}_1+\bar{I}_2+\bar{I}_3.
		\end{flalign}
		\indent Let $\gamma_{\phi}:=\inf\{t_k\in[0,T]:X_i(t_k)\notin\big(\frac{1}{\phi^{-1}(h(\Delta))},\phi^{-1}(h(\Delta))\big)\hspace{0.4em}\text{for some}\hspace{0.4em}i=1,2,\cdots,d\}$. Then we decompose $\bar{I}_1$ as follows
		\begin{flalign*}
			&\E\big[|X(t_k)-\pi_\Delta(X(t_k))|^2\big]
			=\E\big[|X(t_k)-\pi_\Delta(X(t_k))|^2I_{\{\gamma_{\phi}\leq T\}}\big]\\
			\leq&\E\big[|X(t_k)|^2I_{\{\gamma_{\phi}\leq T\}}\big]+\E\big[|\pi_\Delta(X(t_k))|^2I_{\{\gamma_{\phi}\leq T\}}\big]\\
			:=&\bar{I}_{11}+\bar{I}_{12}.
		\end{flalign*}
		By Remark \ref{Remark2}, we have $(\phi^{-1}(h(\Delta)))^{-1}\leq C\Delta^{\frac{\bar{k}}{\alpha\vee(\beta+1)}}$. Applying the H$\rm \ddot{o}$lder inequality and Lemma \ref{Lm2.1} to $\bar{I}_{11}$ yields that
		\begin{flalign*}
			\E\big[|X(t_k)|^2I_{\{\tau_\phi\leq T\}}\big]
			\leq&\Big(\E[|X(t_k)|^{\bar{p}}]\Big)^\frac{2}{\bar{p}}\Big(\PP\big(\gamma_{\phi}\leq T\big)\Big)^\frac{\bar{p}-2}{\bar{p}}\leq C\Delta^{\frac{\bar{k}(\bar{p}-2)(\bar{p}\wedge\bar{q})}{\bar{p}(\alpha\vee(\beta+1))}}.
		\end{flalign*}
		Then $\bar{I}_{12}$ follows from Lemma \ref{Lm2.1} that
		\begin{flalign*}
			\E\big[|\pi_\Delta(X(t_k))|^2I_{\{\gamma_{\phi}\leq T\}}\big]\leq\sqrt{d}(\phi^{-1}(h(\Delta)))^2\frac{C}{(\phi^{-1}(h(\Delta)))^{\bar{p}\wedge\bar{q}}}\leq C\Delta^{\frac{\bar{k}(\bar{p}\wedge\bar{q}-2)}{\alpha\vee(\beta+1)}}.
		\end{flalign*} 
		Therefore,
		\begin{flalign*}
			\E\Big[|X(t_k)-\pi_\Delta(X(t_k))|^2\Big]\leq C\Delta^{\frac{\bar{k}(\bar{p}-2)(\bar{p}\wedge\bar{q})}{\bar{p}(\alpha\vee(\beta+1))}\wedge {\frac{\bar{k}(\bar{p}\wedge\bar{q}-2)}{\alpha\vee(\beta+1)}}}.
		\end{flalign*}
		\par Choosing $\bar{k}=\frac{1}{2}$ and $\bar{p}=\bar{q}=6(\alpha\vee(\beta+1))+2$, we get
		\begin{flalign*}
			\E\Big[|X(t_k)-\pi_\Delta(X(t_k))|^2\Big]\leq C\Delta^3.
		\end{flalign*}
		Consequently, 
		\begin{flalign}\label{bI1}
			\bar{I}_1=\big\|X(t_k)-\pi_\Delta(X(t_k)) \big\|_{L^2(\Omega,\RR^d)}=\Big(\E\Big[|X(t_k)-\pi_\Delta(X(t_k))|^2\Big]\Big)^\frac{1}{2}\leq C \Delta^\frac{3}{2}.
		\end{flalign}
		For $\bar{I}_2$, it follows from Assumption \ref{as.1} that 
		\begin{flalign*}
			&|f(X(t_k))-f_\Delta(X(t_k))|^2\\
			\leq&C\Big(1+|X(t_k)|^\alpha+|\pi_\Delta(
			X(t_k))|^\alpha+|X(t_k)|^{-\beta}+|\pi_\Delta(X(t_k))|^{-\beta}\Big)^2|X(t_k)-\pi_\Delta(X(t_k))|^2\\
			\leq& C\big(1+|X(t_k)|^{2\alpha}+|\pi_\Delta(X(t_k))|^{2\alpha}+|X(t_k)|^{-2\beta}+|\pi_\Delta(X(t_k))|^{-2\beta}\big)|X(t_k)-\pi_\Delta(X(t_k))|^2I_{\{\gamma_{\phi}\leq T\}}\\
			\leq&C\big(1+|X(t_k)|^{-2\beta}\big)(\phi^{-1}(h(\Delta)))^{-2}I_{\{\gamma_{\phi}\leq T\}}+C\big(1+|X(t_k)|^{2\alpha}\big)|X(t_k)|^2I_{\{\gamma_{\phi}\leq T\}}\\
			:=&\bar{I}_{21}+\bar{I}_{22}.
		\end{flalign*}
		For $\bar{I}_{21}$, taking the expectation and using the H$\rm \ddot{o}$lder inequality yield that 
		\begin{flalign*}
			&\E\Big[\big(1+|X(t_k)|^{-2\beta}\big)(\phi^{-1}(h(\Delta)))^{-2}I_{\{\gamma_{\phi}\leq T\}}\Big]\\
			=&\E\Big[(\phi^{-1}(h(\Delta)))^{-2}I_{\{\gamma_{\phi}\leq T\}}\Big]+\E\Big[|X(t_k)|^{-2\beta}(\phi^{-1}(h(\Delta)))^{-2}I_{\{\gamma_{\phi}\leq T\}}\Big]\\
			\leq&C\Delta^{\frac{\bar{k}(\bar{p}\wedge\bar{q}+2)}{\alpha\vee(\beta+1)}}+(\phi^{-1}(h(\Delta)))^{-2}\big( \E[|X(t_k)|^{-\bar{q}}]\big)^\frac{2\beta}{\bar{q}}\big(\PP(\gamma_{\phi}\leq T)\big)^\frac{\bar{q}-2\beta}{\bar{q}}\\
			\leq& 
			C\Delta^{\frac{\bar{k}(\bar{p}\wedge\bar{q}+2)}{\alpha\vee(\beta+1)}\wedge\frac{\bar{k}(\bar{q}-2\beta)(\bar{p}\wedge\bar{q})+2\bar{k}\bar{q}}{\bar{q}(\alpha\vee(\beta+1))}}.
		\end{flalign*}
		Similarly, for $\bar{I}_{22}$, we have
		\begin{flalign*}
			\E \Big[\big(1+|X(t_k)|^{2\alpha}\big)|X(t_k)|^2I_{\{\gamma_{\phi}\leq T\}}\Big]\leq C\Delta^{\frac{\bar{k}(\bar{p}-2)(\bar{p}\wedge\bar{q})}{\bar{p}(\alpha\vee(\beta+1))}\wedge\frac{\bar{k}(\bar{p}\wedge\bar{q})(\bar{p}-2\alpha-2)}{\bar{p}(\alpha\vee(\beta+1))}}.
		\end{flalign*}
		Then it holds that
		\begin{flalign}\label{CD4}
			\E[|f(X(t_k))-f_\Delta(X(t_k))|^2]\leq C\Delta,
		\end{flalign}
		where we choose $\bar{k}=\frac{1}{2}$ and $\bar{p}=\bar{q}=2(\alpha\vee(\beta+1))+2\alpha+2$.  Therefore, we have
		\begin{flalign}\label{bI2}
			\bar{I}_2=\big\|f(X(t_k))-f_\Delta(X(t_k)) \big\|_{L^2(\Omega,\RR^d)}\Delta=\Delta\Big(\E[|f(X(t_k))-f_\Delta(X(t_k))|^2]\Big)^\frac{1}{2}\leq C\Delta^\frac{3}{2}. 
		\end{flalign}
		For $\bar{I}_3$, it follows from $\|\E[Y|\mathcal{F}_{t}]\|_{L^2(\Omega,\RR^d)}\leq\|Y\|_{L^2(\Omega,\RR^d)}$ for all $Y\in L^2(\Omega,\RR^d)$ that
		\begin{flalign*}
			\bar{I}_3=\int_{t_k}^{t_{k+1}}\big\| \E \big[f(X(s))-f(X(t_k)) | \mathcal{F}_{t_k} \big] \big\|_{L^2(\Omega,\RR^d)}\mathrm{d}s\leq\int_{t_k}^{t_{k+1}}\big\|f(X(s))-f(X(t_k)) \big\|_{L^2(\Omega,\RR^d)}\mathrm{d}s.
		\end{flalign*}
		Moreover, by Assumption \ref{as.1}, we obtain
		\begin{flalign*}
			&\E[|f(X(s))-f(X(t_k))|^2]\\
			\leq&K_1\E\Big[\Big(1+|X(s)|^\alpha+|X(t_k)|^\alpha+|X(s)|^{-\beta}+|X(t_k)|^{-\beta}\Big)^2|X(s)-X(t_k)|^2\Big]\\
			\leq&C\E\big[\big(1+|X(s)|^{2\alpha}+|X(t_k)|^{2\alpha}\big)|X(s)-X(t_k)|^2\big]+C\E\big[\big(1+|X(s)|^{-2\beta}+|X(t_k)|^{-2\beta}\big)|X(s)-X(t_k)|^2\big]\\
			:=&\bar{I}_{31}+\bar{I}_{32}.
		\end{flalign*}
		For $\bar{I}_{31}$, by Lemma \ref{Lm2.1} and the H$\rm \ddot{o}$lder inequlity, we obtain
		\begin{flalign*}
			&\E\big[\big(1+|X(s)|^{2\alpha}+|X(t_k)|^{2\alpha}\big)|X(s)-X(t_k)|^2\big]\\
			\leq&\Big(E\big[1+|X(s)|^{\bar{p}}+|X(t_k)|^{\bar{p}}\big]\Big)^\frac{2\alpha}{\bar{p}}\Big(\E\big[|X(s)-X(t_k)|^\frac{2\bar{p}}{\bar{p}-2\alpha} \big] \Big)^\frac{\bar{p}-2\alpha}{\bar{p}}\leq C\Big(\E\big[|X(s)-X(t_k)|^\frac{2\bar{p}}{\bar{p}-2\alpha} \big] \Big)^\frac{\bar{p}-2\alpha}{\bar{p}}.
		\end{flalign*}
		By letting $p=\frac{\bar{p}}{\bar{p}-2\alpha}$ in Lemma \ref{Lemma4.3}, we can obtain $\bar{I}_{31}\leq C\Delta$. Similarly, $\bar{I}_{32}\leq C\Delta $ can be also obtained. Therefore,
		\begin{flalign}\label{4.6}
			\E[|f(X(s))-f(X(t_k))|^2]\leq C\Delta.
		\end{flalign}
		Finally, we can get			
		\begin{flalign}\label{bI3}
			\bar{I}_3\leq\int_{t_k}^{t_{k+1}}\big\|f(X(s))-f(X(t_k)) \big\|_{L^2(\Omega,\RR^d)}\mathrm{d}s=\int_{t_k}^{t_{k+1}}\big(\E |f(X(s))-f(X(t_k))|^2 \big)^\frac{1}{2}\mathrm{d}s\leq C\Delta^\frac{3}{2}.
		\end{flalign}
		Substituting \eqref{bI1}, \eqref{bI2}, and \eqref{bI3} into \eqref{bI} yields that
		\begin{flalign}\label{Bcon1}
			\big\| \E \big[ X(t_k+\Delta)-\Psi(X(t_k),\Delta)| \mathcal{F}_{t_k} \big] \big\|_{L^2(\Omega,\RR^d)}\leq C\Delta^\frac{3}{2}.
		\end{flalign}
		It is not difficult to get
		\begin{flalign}\label{JJ}
			&\big\| \big( \id - \E [ \, \cdot \, | \mathcal{F}_{t} ] \big)\big( X(t + \Delta) - \Psi(X(t),\Delta) \big)\big\|_{L^2(\Omega,\RR^d)}\nonumber\\
			\leq &\int_{t_k}^{t_{k+1}}\big\|\big( \id - \E [ \, \cdot \, | \mathcal{F}_{t} ] \big)(f(X(s))-f(X(t_k))) \big\|_{L^2(\Omega,\RR^d)}\mathrm{d}s+\Big\|\big(g(X(t_k))-g_\Delta(X(t_k))\big)\Delta B_k\Big\|_{L^2(\Omega,\RR^{d\times m})}\nonumber\\
			&+\Big\|\int_{t_k}^{t_{k+1}}g(X(s))-g(X(t_k))\mathrm{d}B(s) \Big\|_{L^2(\Omega,\RR^{d\times m})}:=\bar{J}_1+\bar{J}_2+\bar{J}_3.
		\end{flalign}
		For $\bar{J}_1$, it follows from $\| \big( \id - \E [ \, \cdot \, | \mathcal{F}_{t} ] \big)Y\|_{L^2(\Omega,\RR^d)}\leq\|Y \|_{L^2(\Omega,\RR^d)}$ for all $Y\in L^2(\Omega,\RR^d) $ and the estimation of $\bar{I}_3$ that
		\begin{flalign}\label{J1}
			\bar{J}_1
			\leq &\int_{t_k}^{t_{k+1}}\big\|(f(X(s))-f(X(t_k))) \big\|_{L^2(\Omega,\RR^d)}\mathrm{d}s\leq C\Delta^\frac{3}{2}.
		\end{flalign}
		For $\bar{J}_2$, by the It\^o isometry and the similar proof of the estimation of $\eqref{CD4}$, we can get
		\begin{flalign}\label{J2}
			\bar{J}_2=\Delta^\frac{1}{2}\big\|g(X(t_k))-g_\Delta(X(t_k)) \big\|_{L^2(\Omega,\RR^{d\times m})}
			=\Delta^\frac{1}{2}\Big(\E[|g(X(t_k))-g_\Delta(X(t_k))|^2]\Big)^\frac{1}{2}\leq C\Delta. 
		\end{flalign}
			Applying the It\^o isometry and a similar way estimating to $\eqref{4.6}$ yields that
			\begin{flalign}\label{J3}
				\bar{J}_3 \leq\Big(\int_{t_k}^{t_{k+1}}\big\| g(X(t_k))-g_\Delta(X(t_k))\big\|_{L^2(\Omega,\RR^{d\times m})}^2\mathrm{d}s\Big)^\frac{1}{2}
				=\Big(\int_{t_k}^{t_{k+1}}\E |g(X(s))-g(X(t_k))|^2 \mathrm{d}s\Big)^\frac{1}{2}\leq C\Delta.
			\end{flalign}
			Substituting \eqref{J1}, \eqref{J2}, and \eqref{J3} into \eqref{JJ} yields that
			\begin{flalign}\label{Bcon2}
				\big\| \big( \id - \E [ \, \cdot \, | \mathcal{F}_{t} ] \big)\big( X(t + \Delta) - \Psi(X(t),\Delta) \big)\big\|_{L^2(\Omega,\RR^d)}\leq C\Delta.
			\end{flalign}
			Therefore, combining \eqref{Bcon1} with \eqref{Bcon2} yields that \eqref{bcon1} and \eqref{bcon2} hold with $\gamma_0=\frac{1}{2}$. \hfill
		\end{proof}
		
		\indent We conclude that the strong convergence rate of the TEM method is order 1/2, which follows directly from Theorem \ref{Thm4.2} and Theorem \ref{Thm4.3} as well as Theorem 3.7 in \cite{pj2016}. 
		
		\begin{proposition}\label{Pro 1}
			Let Assumptions \ref{as.1}, \ref{as.2}, and \ref{as.3} hold with $\alpha\vee(\beta+1)\leq \bar{p}+\bar{q}$. Then for any $\Delta\in(0,1)$, the TEM method is strongly convergent of $\frac{1}{2}$.
			\begin{equation}
				\sup_{0\leq k\Delta\leq T}\E [|X(t_k)-\tilde{X}_k|^2]\leq C\Delta.
			\end{equation}
		\end{proposition}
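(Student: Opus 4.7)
The plan is to deduce the proposition as a direct corollary of Theorem 3.7 in \cite{pj2016}, which asserts that stochastic C-stability together with stochastic B-consistency of order $\gamma_0$ implies mean-square convergence at the grid points at rate $\gamma_0$. Both hypotheses are already in hand: Theorem \ref{Thm4.2} delivers C-stability under Assumptions \ref{as.1} and \ref{as.3}, and Theorem \ref{Thm4.3} delivers B-consistency with $\gamma_0=\tfrac{1}{2}$ under Assumptions \ref{as.1}--\ref{as.3} together with the compatibility bound $\alpha\vee(\beta+1)\leq\bar{p}+\bar{q}$. The strategy is therefore to verify that the abstract framework applies and then read off the order.

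The only non-mechanical point is to match the abstract $\Psi$-iterates of \cite{pj2016} with the concrete pre-truncation sequence $\{\tilde{X}_k\}$ from \eqref{TEM}. Since
\begin{equation*}
\Psi(X,\Delta)=\pi_\Delta(X)+f(\pi_\Delta(X))\Delta+g(\pi_\Delta(X))\Delta B_k,
\end{equation*}
applying $\Psi$ to $\tilde{X}_k$ produces $\pi_\Delta(\tilde{X}_k)+f(X_k)\Delta+g(X_k)\Delta B_k=X_k+f(X_k)\Delta+g(X_k)\Delta B_k=\tilde{X}_{k+1}$ by the PPTEM recursion. Consequently, the iterates of $\Psi$ launched from $\tilde{X}_0=X_0$ coincide with $\tilde{X}_1,\tilde{X}_2,\ldots$. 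Moreover, for all $\Delta$ small enough, $\pi_\Delta(X_0)=X_0$ because $\phi^{-1}(h(\Delta))\to\infty$ as $\Delta\to 0$ and $X_0\in\RR^d_+$ is a fixed interior point, so the initial error vanishes and the abstract machinery applies with no boundary correction.

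Feeding these ingredients into Theorem 3.7 of \cite{pj2016} yields
\begin{equation*}
\sup_{0\leq k\Delta\leq T}\big\|X(t_k)-\tilde{X}_k\big\|_{L^2(\Omega;\RR^d)}^2\leq C\Delta^{2\gamma_0}=C\Delta,
\end{equation*}
which is precisely the stated bound. I expect no substantive obstacle here: all the analytic work (moment and inverse-moment control, one-step truncation error, stability estimate) has already been absorbed into Lemmas \ref{Lemma4.2}--\ref{Lemma4.3} and Theorems \ref{Thm4.2}--\ref{Thm4.3}. The residual effort is purely bookkeeping, namely checking that the hypotheses of the abstract theorem in \cite{pj2016} are phrased in the form verified here (C-stability in the sense of Definition \ref{Def1} and B-consistency in the sense of Definition \ref{Def2}) and confirming the identification $Y_k\equiv\tilde{X}_k$ above; no additional estimate beyond what is already proved is required.
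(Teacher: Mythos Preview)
Your proposal is correct and matches the paper's own argument: the paper states that Proposition \ref{Pro 1} ``follows directly from Theorem \ref{Thm4.2} and Theorem \ref{Thm4.3} as well as Theorem 3.7 in \cite{pj2016}'', which is precisely the route you take. Your additional observation that the abstract $\Psi$-iterates coincide with $\{\tilde X_k\}$ (since $\Psi(\tilde X_k,\Delta)=\tilde X_{k+1}$) and that $\pi_\Delta(X_0)=X_0$ for small $\Delta$ fills in bookkeeping the paper leaves implicit.
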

		
		\indent We present the following main theorem demonstrating that the PPTEM method is strongly convergent of order 1/2.
		\begin{theorem}\label{strmain}
			Let Assumptions \ref{as.1}, \ref{as.2}, and \ref{as.3} hold with $\alpha\vee(\beta+1)\leq \bar{p}+\bar{q}$. Then for all sufficiently small $\Delta\in(0,\Delta^*]$ satisfying \eqref{hD}, the PPTEM method is strongly convergent of order 1/2, i.e., 
			\begin{equation}\label{main}
				\sup_{0\leq k\Delta\leq T}\E[|X(t_k)-X_k|^2]\leq C\Delta.
			\end{equation}
		\end{theorem}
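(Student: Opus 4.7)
\textbf{Proof plan for Theorem \ref{strmain}.} The strategy is to reduce the strong error of the PPTEM iterates $X_k = \pi_\Delta(\tilde X_k)$ to the strong error of the auxiliary (non-truncated) TEM iterates $\tilde X_k$ via the elementary splitting
\begin{flalign*}
\E[|X(t_k) - X_k|^2] \leq 2\,\E[|X(t_k) - \tilde X_k|^2] + 2\,\E[|\tilde X_k - X_k|^2],
\end{flalign*}
and then control each piece separately. The first term is already handled: Proposition \ref{Pro 1} gives $\E[|X(t_k) - \tilde X_k|^2] \leq C\Delta$ uniformly in $k\Delta \in [0,T]$ under exactly the hypotheses of the theorem. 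So the entire burden is to show that the truncation discrepancy $\E[|\tilde X_k - X_k|^2]$ is also $O(\Delta)$.

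The key observation for the second term is that the truncation mapping $\pi_\Delta$ acts nontrivially only when some component of $\tilde X_k$ leaves the interval $\bigl((\phi^{-1}(h(\Delta)))^{-1},\,\phi^{-1}(h(\Delta))\bigr)$; that is, precisely on the event $\{\tilde{\xi}_\phi \leq T\}$ from Corollary \ref{Corollary2}. Hence
\begin{flalign*}
\E[|\tilde X_k - X_k|^2] = \E\bigl[|\tilde X_k - \pi_\Delta(\tilde X_k)|^2 I_{\{\tilde{\xi}_\phi \leq t_k\}}\bigr].
\end{flalign*}
Applying H\"older's inequality with conjugate exponents $\bar p/2$ and $\bar p/(\bar p - 2)$, bounding $|\tilde X_k - \pi_\Delta(\tilde X_k)|^{\bar p} \leq 2^{\bar p -1}(|\tilde X_k|^{\bar p} + |X_k|^{\bar p})$, and invoking the uniform moment bounds from Lemma \ref{pp numerical integral} and Corollary \ref{Corollary2}, I obtain
\begin{flalign*}
\E[|\tilde X_k - X_k|^2] \leq C\,\bigl(\PP(\tilde{\xi}_\phi \leq T)\bigr)^{(\bar p - 2)/\bar p} \leq \frac{C}{(\phi^{-1}(h(\Delta)))^{(\bar p \wedge \bar q)(\bar p - 2)/\bar p}},
\end{flalign*}
where the last inequality is \eqref{corollary2}.

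It remains to convert the right-hand side into a power of $\Delta$ and tune the free parameters $\bar p, \bar q, \bar k$ so the exponent of $\Delta$ reaches $1$. By Remark \ref{Remark2}, we have $\phi^{-1}(h(\Delta)) \geq \hat K_0 \Delta^{-\bar k/(\alpha \vee (\beta + 1))}$, so
\begin{flalign*}
\E[|\tilde X_k - X_k|^2] \leq C\,\Delta^{\bar k (\bar p \wedge \bar q)(\bar p - 2)/(\bar p(\alpha \vee (\beta + 1)))}.
\end{flalign*}
Choosing $\bar k = 1/2$ and $\bar p = \bar q$ sufficiently large (e.g.\ $\bar p = \bar q \geq 4(\alpha \vee (\beta+1)) + 2$, which is compatible with $\bar p \geq 2(\alpha + 1)$, $\bar q \geq 2\beta$, and the hypothesis $\alpha \vee (\beta+1) \leq \bar p + \bar q$) makes the exponent at least $1$, yielding the desired $O(\Delta)$ bound. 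Combining this with the Proposition \ref{Pro 1} estimate and taking the supremum over $k\Delta \in [0,T]$ proves \eqref{main}.

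The main technical obstacle is the calibration step: one must pick $\bar p$, $\bar q$ large enough that the H\"older loss $(\bar p - 2)/\bar p$ combined with the polynomial-in-$\Delta^{-1}$ gain from Corollary \ref{Corollary2} still overcomes the growth rate $\alpha \vee (\beta+1)$ appearing in Remark \ref{Remark2}. This is why the hypothesis $\alpha \vee (\beta+1) \leq \bar p + \bar q$ (inherited from Theorem \ref{Thm4.3}) is essential—it guarantees enough freedom in the moment exponents to absorb both the truncation tail and the polynomial coefficient growth simultaneously.
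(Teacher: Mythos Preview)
Your proposal is correct and follows essentially the same route as the paper's proof: split the error into $|X(t_k)-\tilde X_k|^2$ (handled by Proposition~\ref{Pro 1}) and the truncation discrepancy $|\tilde X_k-X_k|^2$, observe that the latter vanishes off the event $\{\tilde\xi_\phi\le T\}$, apply H\"older together with the moment bounds of Lemma~\ref{pp numerical integral}/Corollary~\ref{Corollary2} and the tail estimate \eqref{corollary2}, and finally use Remark~\ref{Remark2} with $\bar k=\tfrac12$ to tune the exponent to at least $1$. The only cosmetic difference is that the paper takes the minimal choice $\bar p=\bar q=2(\alpha\vee(\beta+1))+2$, which makes the exponent exactly $1$, whereas your larger choice also works.
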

		\begin{proof}
			Recall $\tilde{\xi}_\phi:=\inf\{t_k\in[0,T]:\tilde{X}_{k,i}\notin((\phi^{-1}(h(\Delta)))^{-1},\phi^{-1}(h(\Delta)))\hspace{0.4em}\text{for some}\hspace{0.4em}i=1,2,\cdots,d\}$, where $\tilde{X}_{k,i}$ denotes the $i$-th element of $\tilde{X}_{k}$. Let $R=K_0\Delta^{-\frac{\bar{k}}{\alpha\vee(\beta+1)}}$, then $\phi^{-1}(h(\Delta))\geq R$. By the H\"older inequality, Lemma \ref{pp numerical integral} and Corollary \ref{Corollary2}, for all sufficiently small $\Delta\in(0,\Delta^*]$ satisfying \eqref{hD}, we have
			\begin{flalign*}
				&\E[|\tilde{X}_k-X_k|^2]=\E[|\tilde{X}_k-X_k|^2I_{\{\tilde{\xi}_\phi\leq T\}}] 
				\leq\E[|\tilde{X}_k|^2I_{\{\tilde{\xi}_\phi\leq T\}}]+\E[|X_k|^2I_{\{\tilde{\xi}_\phi\leq T\}}]\\
				\leq&\big(\E[|\tilde{X}_k|^{\bar{p}}]\big)^{\frac{2}{\bar{p}}}\big(\PP(\tilde{\xi}_\phi\leq T)^{\frac{\bar{p}-2}{\bar{p}}}\big)+\big(\E[|X_k|^{\bar{p}}]\big)^{\frac{2}{\bar{p}}}\big(\PP(\tilde{\xi}_\phi\leq T)^{\frac{\bar{p}-2}{\bar{p}}}\big)\\
				\leq&C\Delta^{\frac{\bar{k}(\bar{p}-2)(\bar{p}\wedge\bar{q})}{\bar{p}(\alpha\vee(\beta+1))}}.
			\end{flalign*}
			\indent Choosing $\bar{k}=\frac{1}{2}$ and $\bar{p}=\bar{q}=2(\alpha\vee(\beta +1))+2$, we get $\frac{\bar{k}(\bar{p}-2)(\bar{p}\wedge\bar{q})}{\bar{p}(\alpha\vee(\beta+1))}=1$. Consequently,
			\begin{flalign}\label{txx}
				\sup_{0\leq k\Delta\leq T}\E[|\tilde{X}_k-X_k|^2]\leq C\Delta.
			\end{flalign}
			Finally, by Proposition \ref{Pro 1} and \eqref{txx}, we derive
			\begin{flalign*}
				\sup_{0\leq k\Delta\leq T}\E[|X(t_k)-X_k|^2]\leq\sup_{0\leq k\Delta\leq T}\E[|X(t_k)-\tilde{X}_k|^2+|\tilde{X}_k-X_k|^2]\leq C\Delta.
			\end{flalign*}
			The proof is completed.\hfill
		\end{proof}

		\section{Numerical experiments}
		\label{6}
		In this section, we present numerical experiments conducted by using the PPTEM method, along with numerical comparisons between the PPTEM scheme and the TEM \cite{M15} and EM methods to validate our theoretical results. Before conducting numerical experiments, we need to provide some instructions. The expressions for the evaluated mean square error at the terminal time $T$ are given by 
		\begin{equation*}
			\Vert X(T)-X_T \Vert_{L_{2}} = \Big( \frac{1}{M}\sum_{i=1}^{M}|X^i(T) - X^i_T|^2\Big)^{\frac{1}{2}},
		\end{equation*}
		where $M=10^5$ denote the number of sample paths, $X^i(T)$ denotes the $i$-th exact solution, $X^i_T$ denotes the $i$-th numerical solution. Unless otherwise specified, when we estimate the mean square errors of a method, we usually use the numerical solutions from this method with step size $\Delta = 2^{-14}$ as a replacement for the unknown exact solution, and generate the numerical solutions of this method with different step sizes $\Delta = 2^{-12}, 2^{-11}, 2^{-10}, 2^{-9}, 2^{-8}$. 
		
		\subsection{One-dimensional tests}

		\begin{example}{CEV process}\label{CEV}
			\par We consider the CEV process
			\begin{align}\label{cev}
				\mathrm{d}X(t) = \kappa(\mu-X(t))\mathrm{d}t + \xi X^\theta(t) \mathrm{d}B(t),
			\end{align}
			where constants $\kappa,\mu,\xi>0$ and $\theta \in (\frac{1}{2},1)$. According to Example 4.2 in \cite{TX2024}, by using the Lamperti transformation $Y=X^{1-\theta}$, the coefficients of transformed SDE satisfy Assumptions \ref{as.1}, \ref{as.2}, and \ref{as.3}. This means that Theorem \ref{strmain} can be applied to the CEV process. Therefore, for all $\Delta\in(0,\Delta^*]$, we have 
			\begin{flalign*}
				\sup_{0\leq k\Delta\leq T}\E[|Y(t_k)-Y_k|^2]\leq C\Delta,
			\end{flalign*}
			where $Y_k$ is the numerical solution of PPTEM method. Moreover, one can show that the PPTEM method has a strong convergence order 1/2 for this model. \\
			\indent In our experiments, we take $\kappa = 4, \mu = 0.5, \xi = 1, \alpha = 0.55, T = 1, X_0 = 2$. As shown in Fig. \ref{figure1}, the strong convergence rate is $\frac{1}{2}$, which confirms the theoretical results. 
		\end{example}
		
		\begin{figure}[htbp]
			\centering
			
			\includegraphics[height = 8cm, width=10cm]{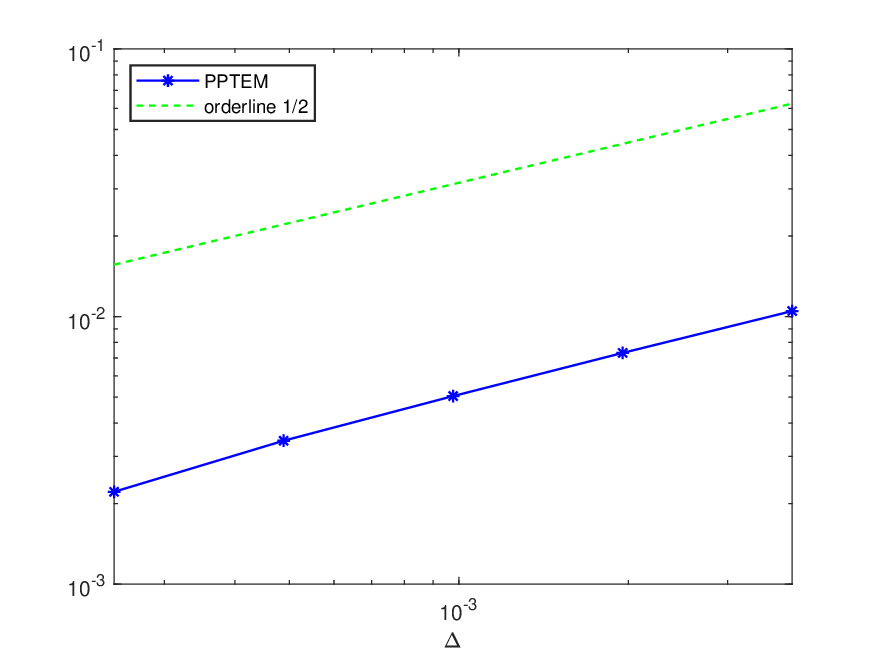}		
			\caption{Mean square convergence rate of the PPTEM method for the CEV model.}
			\label{figure1}
		\end{figure}
		
		\begin{example}{A\"it--Sahalia (AS) model}\label{AST}
			\par We consider the generalized AS model
			\begin{align}\label{ast}
				\mathrm{d}X(t) = [a_{-1}X^{-1}(t)-a_0+a_1X(t)-a_2X^r(t)]\mathrm{d}t + \sigma X^\rho(t) \mathrm{d}B(t),
			\end{align}
			where constants $a_{-1},a_0,a_1,a_2,\sigma>0$, $r,\rho> 1$ and $r+1>2\rho$. According to Example 4.1 in \cite{TX2024}, the coefficients of SDE \eqref{ast} satisfy Assumptions \ref{as.1}, \ref{as.2}, and \ref{as.3}. Therefore, Theorem \ref{strmain} can be applied to the AS model, which implies that the PPTEM method has a strong convergence order 1/2 for this model. \\
			\indent In a special case of the generalized Ait-Sahalia process with parameters $(a_{-1}, a_0, a_1, a_2, r, \rho) = (0, 0, \lambda + \frac{\sigma}{2}, 1, 3, 1)$, the scalar stochastic Ginzburg-Landau (GL) equation 
			\begin{flalign*}
				\mathrm{d}X(t) = (-X^3(t)+(\lambda+\frac{\sigma^2}{2})X(t))\mathrm{d}t+\sigma X(t)\mathrm{d}B(t),
			\end{flalign*}
			where $\lambda,\sigma\geq0$, complies with the aforementioned assumptions. \\
			\indent In our experiments, we take $\lambda = 1, \sigma = 5, T = 1, X_0 =1$. As shown in Fig. \ref{figure2}, we observe that the strong convergence rate of the PPTEM method is $\frac{1}{2}$, which is consistent with our theoretical results. \\
			\indent Besides, we take $a_{-1}=3,a_0=2,a_1=1,a_2=5,\sigma=2,r=4,\rho=2,X_0=2,T=1$ such that $r>2\rho-1$. In Fig. \ref{figure3}, the strong convergence rate is $\frac{1}{2}$ as expected. 
		\end{example}
		
		\begin{figure}[htbp]
			\centering
			\includegraphics[height = 8cm, width=10cm]{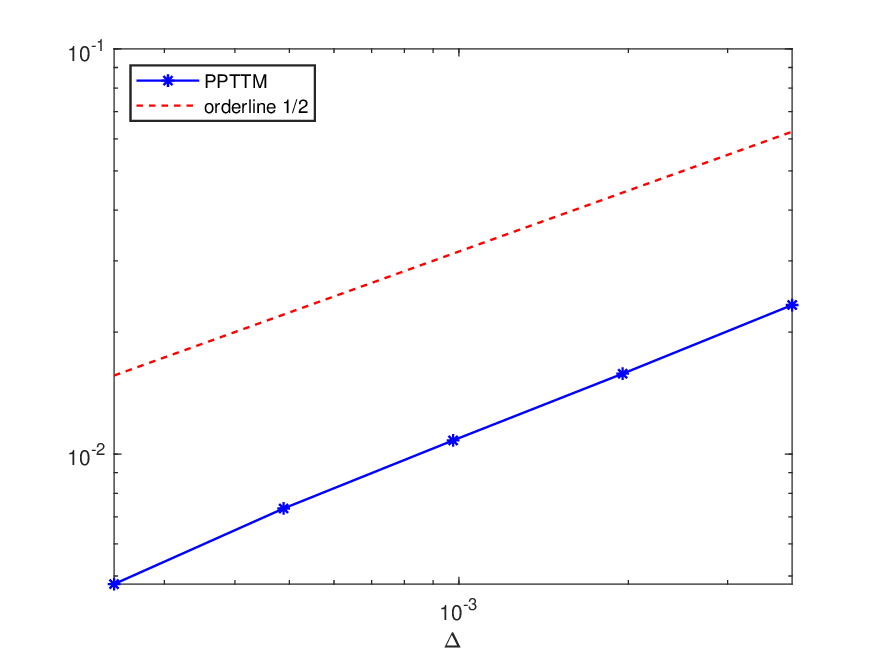}		
			\caption{Mean square convergence rates of the PPTEM method for the GL model.}
			\label{figure2}
		\end{figure}
		
		\begin{figure}[htbp]
			\centering
			\includegraphics[height = 8cm, width=10cm]{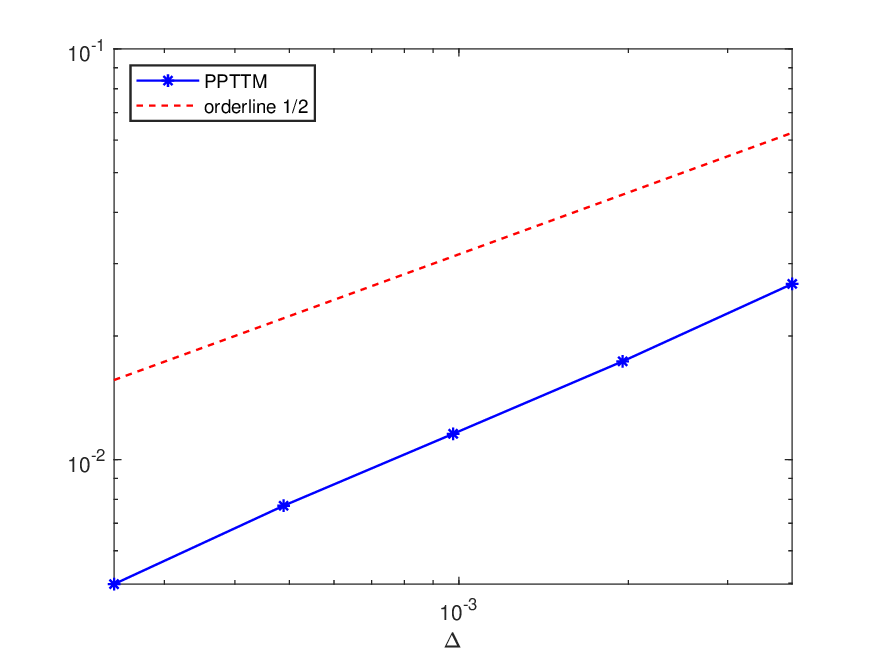}
			\caption{Mean square convergence rates of the PPTEM method for the A\"it--Sahalia model.}
			\label{figure3}
		\end{figure}
		
		\indent We list the percentages of non-positive numerical values of the PPTEM, EM and TEM methods for the two models, see Table \ref{tabb}. As anticipated, our numerical method preserves the positivity of the numerical solutions, whereas the other methods do not. Positivity preserving is the advantage of the PPTEM method, which is consistent with the theoretical result.
		
		\begin{table}[htbp]
			\caption{The percentages of non-positive numerical values of the PPTEM, EM, and TEM methods with different step sizes using $10^5$ sample paths for two models\label{tabb}}  
			\begin{tabular*}{\columnwidth}{@{\extracolsep\fill}lllll@{\extracolsep\fill}@{\extracolsep\fill}}
				\toprule
				Model &Step sizes & PPTEM (\%) &EM (\%)  &TEM (\%) \\
				\midrule
				CEV & \tabincell{c}{ $2^{-2}$ \\ $2^{-3}$ \\ $2^{-4}$ \\ $2^{-5}$ } & \tabincell{c}{0\\0\\0\\0} & \tabincell{c}{12.09\\1.47\\0.12\\0.01} & \tabincell{c}{6.19\\0.11\\0\\0}\\
				\hline
				AS & \tabincell{c}{ $2^{-6}$ \\ $2^{-7}$ \\ $2^{-8}$\\ $2^{-9}$ } & \tabincell{c}{0\\0\\0\\0} & \tabincell{c}{8.93\\1.88\\0.25\\0.02} & \tabincell{c}{27.85\\0.50\\0.01\\0} \\
				\bottomrule  
			\end{tabular*}
		\end{table}
		
		\par Compared with the logarithmic TEM method \cite{PPlogTM,LG2023} and the logarithmic truncated Milstein method \cite{HXW}, authors in \cite{TX2024} employ weaker conditions such that the parameter restrictions have been lifted. To be more specific, the A\"it--Sahalia model with $r>2\rho-1$ rather than $r>4\rho-3$ and the CEV process with $\theta\in(\frac{1}{2},1)$ rather than $\theta\in(\frac{3}{4},1)$. It is worth mentioning that our method also achieves this, except in terms of the assumptions regarding strong convergence. Besides, our method still maintains its advantages in the following multi-dimensional numerical examples.

		\subsection{Multi-dimensional tests}
		
		\begin{example}{Stochastic Lotka--Volterra system }\label{6.1}
			\par We consider the $d$-dimensional stochastic Lotka--Volterra system 
			\begin{flalign}\label{LV}
				\mathrm{d}X(t)&=\mathrm{diag}(X_1(t),\cdots,X_d(t))[f(X(t))\mathrm{d}t+(\sigma+\zeta(X(t)))\mathrm{d}B(t)] \nonumber\\
				&:=F(X(t))\mathrm{d}t+G(X(t))\mathrm{d}B(t),
			\end{flalign}
			where $f(X)=(f_1(X),\cdots,f_d(X))^T=c+AX:\RR_+^d\rightarrow\RR^d$, the parameters $c=(c_1,\cdots,c_d)^T\in\RR^d$, $A=(a_{ij})_{d\times d}\in\RR^{d\times d}$, $\sigma=(\sigma_1,\cdots,\sigma_d)^T\in\RR^d$ and $\zeta=(\zeta_1,\cdots,\zeta_d)^T:\RR_+^d\rightarrow\RR^d$. \\
			\indent For any $a,b\in\RR_+^d$, we let $\mathrm{L}(a,b):=\{a+t(b-a)|t\in[0,1]\}$. The mean value theorem implies that there exists $u\in \mathrm{L}(a,b)$ such that 
			\begin{flalign*}
				F(a)-F(b)=DF(u)(a-b).
			\end{flalign*}
			Then it follows that 
			\begin{flalign*}
				|F(a)-F(b)|\leq|DF(u)||a-b|\leq C(1+|a|+|b|)|a-b|
			\end{flalign*}
			because $DF(X)=c+2\mathrm{diag}(X_1,\cdots,X_d)A$. It is easy to see that $|DG(X)|\leq C$ under Assumption 2.3 in \cite{YXF2024}. Now we see Assumption \ref{as.1} holds with $\alpha=1$ and $\beta=0$.\\
			\indent Under Assumption 4.1 in \cite{YXF2024}, one can verify that Assumption \ref{as.2} can be satisfied with $c_i\geq -a_{ii}+\frac{\bar{q}+1}{2}(\sigma_i+\zeta_i)^2)$ for any $i\in\{1,2,\cdots,d\}$.
			Further, there exists a positive constant $C$ such that
			\begin{flalign}\label{LVass2}
				X^TF(X)+\frac{\bar{p}-1}{2}|G(X)|^2\leq C(1+|X|^2), \quad |X|\in[\bar{x},\infty),
			\end{flalign}
			because \eqref{LVass2} tends to negative infinite as $|X|\rightarrow \infty$. Moreover, for any $p>2$,
			\begin{flalign*}
				\langle DF(X)y,y\rangle+\frac{p-1}{2}|DG(X)y|^2 &=\langle  [c+2\mathrm{diag}(X_1,\cdots,X_d)A]y,y\rangle+\frac{p-1}{2}|[\sigma+\zeta(X)]y|^2\\
				&\leq C|y|^2.
			\end{flalign*}
			Thus, Assumption \ref{as.3} holds. It means that Theorem \ref{strmain} can be applied to the multi-dimensional Lotka--Volterra system. That is to say, for this model, the PPTEM method has the strong convergence order 1/2.\\
			\indent For our experiments, we take $c_1=50,c_2=30,c_3=20,a_{11}=-55,a_{22}=-10,a_{33}=-15,\sigma_1=7,\sigma_2=2,\sigma_3=5, T=1, X_0=(0.5,2,1)^T, \zeta_1=\frac{\sin(X_1(t))+\sin(X_2(t))+\sin(X_3(t))}{1+X_1(t)+X_2(t)+X_3(t)},\zeta_2=\frac{X_1(t)+X_2(t)+X_3(t)}{1+(X_1(t)+X_2(t)+X_3(t))^2}$ and $\zeta_3= \frac{\cos(X_1(t))+\cos(X_2(t))}{1+X_3^2(t)}$. Besides, we take other non-mentioned parameters as zero. Then we have the following equations (e.g. in \cite{YXF2024})
			\begin{flalign}\label{LV3}
				&\mathrm{d}X_1(t)=\big(50X_1(t)-55X^2_1(t)\big)\mathrm{d}t+X_1(t)\Big(7+\frac{\sin(X_1(t))+\sin(X_2(t))+\sin(X_3(t))}{1+X_1(t)+X_2(t)+X_3(t)}\Big)\mathrm{d}B(t),\nonumber\\
				&\mathrm{d}X_2(t)=\big(30X_2(t)-10X^2_2(t)\big)\mathrm{d}t+X_2(t)\Big(2+\frac{X_1(t)+X_2(t)+X_3(t)}{1+(X_1(t)+X_2(t)+X_3(t))^2}\Big)\mathrm{d}B(t),\\
				&\mathrm{d}X_3(t)=\big(20X_3(t)-15X^2_3(t)\big)\mathrm{d}t+X_3(t)\Big(5+\frac{\cos(X_1(t))+\cos(X_2(t))}{1+X_3^2(t)}\Big)\mathrm{d}B(t)\nonumber
			\end{flalign}
			in $\RR_+^3$. As shown in Fig. \ref{figure4}, it is clearly seen that the strong convergence rate is consistent with our theoretical results.
		\end{example}
		
		\begin{figure}[htbp]
			\centering
			
			\includegraphics[height = 8cm, width=10cm]{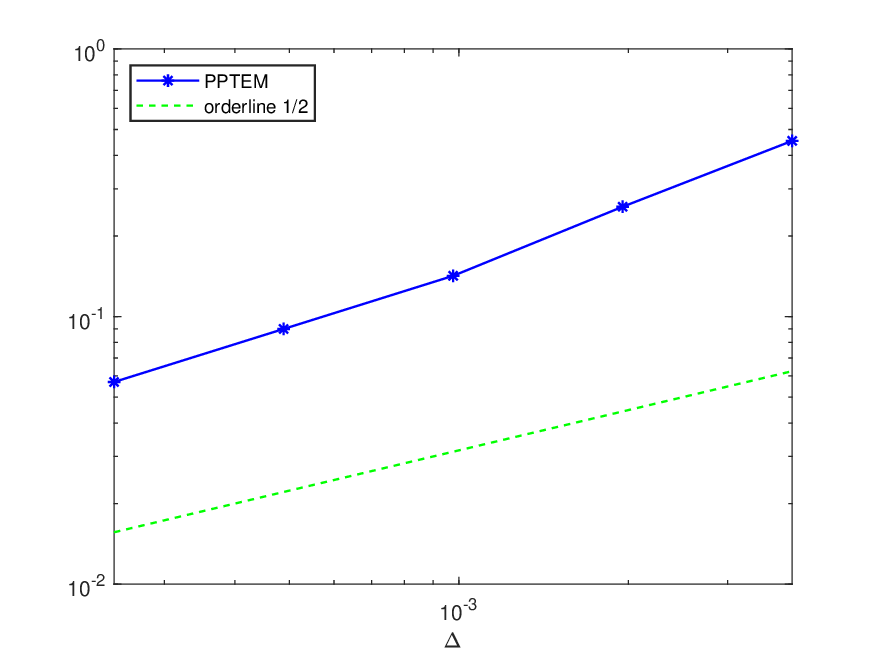}		
			
			\caption{Mean square convergence rate of the PPTEM method for the stochastic Lotka-Volterra system. }
			\label{figure4}
		\end{figure}
		
		Besides, we list the mean-square error of the EM, TEM, and PPTEM methods about Example \ref{6.1}. Table \ref{tab} shows that the EM and TEM methods fail to solve the SDE \eqref{LV} effectively when using a large step size. In contrast, our method effectively solves this equation.
		
		\begin{table}[htbp]
			\caption{The mean-square error of the EM, TEM and PPTEM methods with different step sizes using $10^5$ sample paths\label{tab}}  
			\begin{tabular*}{\columnwidth}{@{\extracolsep\fill}llll@{\extracolsep\fill}llll@{\extracolsep\fill}llll@{\extracolsep\fill}}
				\toprule 
				Step sizes &$2^{-8}$ &$2^{-9}$  &$2^{-10}$ &$2^{-11}$ &$2^{-12}$  \\
				\midrule
				EM& NaN & NaN &NaN & 0.0914&  0.0572 \\
				TEM&  NaN &NaN&NaN & 0.0893&  0.0565\\
				PPTEM & 0.4538 &0.2570 & 0.1417& 0.0897&0.0568\\
				\bottomrule
			\end{tabular*}
		\end{table}
		
		The positivity-preserving methods \cite{YQX2024,YXF2024} have been proposed to solve multi-dimensional stochastic Kolmogorov equations with super-linear coefficients. In the case where $d=1$, these stochastic Kolmogorov equations can be simplified to
		\begin{flalign*}
			\mathrm{d}x(t)=x(t)[(c+ax(t)) \mathrm{d}t+ (\sigma+\zeta(x(t)))\mathrm{d}B(t)],
		\end{flalign*}
		which excludes the SDE presented in equation \eqref{ast} when $r>2$, our method is capable of solving \eqref{ast}. As a result, we see that our method can be applied to a broader range of model equations compared to the existing multi-dimensional positivity-preserving methods. 
		
		\begin{example}{Susceptible-Infected-Recovered-Susceptible (SIRS) epidemic model}\label{SIR1}
			\par We consider SIRS epidemic model
			\begin{flalign}\label{SIR}
				\left\{ 
				\begin{aligned}
					&\mathrm{d}S(t) = [\mu(t)-\mu(t) S(t)-\xi(t) S(t)I(t)]\mathrm{d}t-\sigma S(t)I(t)\mathrm{d}B(t), \\
					&\mathrm{d}I(t) = [\xi(t) S(t)I(t)-(\mu(t)+\gamma(t)) I(t)]\mathrm{d}t+\sigma S(t)I(t)\mathrm{d}B(t),\\
					&\mathrm{d}R(t) =[\gamma(t) I(t)-\mu(t) R(t)]\mathrm{d}t,
				\end{aligned}
				\right.
			\end{flalign}
			where $\sigma>0$ and $\mu(t), \xi(t), \gamma(t)$ are positive, non-constant and continuous functions of period $\omega$ with $\gamma(t)<\mu(t) $ for all $t\geq 0$. The model \eqref{SIR} is the version without time delay in \cite{SCZ2020}. Meanwhile, as mentioned in \cite{SCZ2020}, it holds $N(t)=S(t)+I(t)+R(t)\leq A$, where $N(t)$ denotes the population sizes and $A:=N(0)\vee1$. To better verify the hypothesis, we rewrite equation \eqref{SIR} as the following equation
			\begin{flalign*}
				\mathrm{d}X(t)=F(X(t))\mathrm{d}t+G(X(t))\mathrm{d}B(t).
			\end{flalign*}
			\indent For any $a,b\in\RR_+^3$, we let $\mathrm{L}(a,b):=\{a+t(b-a)|t\in[0,1]\}$. The mean value theorem implies that there exists $u\in \mathrm{L}(a,b)$ such that 
			\begin{flalign*}
				F(a)-F(b)=DF(u)(a-b).
			\end{flalign*}
			It follows that 
			\begin{flalign*}
				|F(a)-F(b)|^2&= |DF(u)|^2|a-b|^2\\
				&\leq [(\mu+\xi u_2)^2+(\xi u_1-(\mu+\gamma))^2+\mu^2]|a-b|^2\\
				&\leq [3\mu^2+2\xi^2u_2^2+2\xi^2u_1^2+2(\mu+\gamma)^2]|a-b|^2\\
				&\leq C(1+|a|^2+|b|^2)|a-b|^2.
			\end{flalign*}
			Similarly, one can see 
			\begin{flalign*}
				|G(a)-G(b)|^2\leq C(1+|a|^2+|b|^2)|a-b|^2.
			\end{flalign*}
			Then we see Assumption \ref{as.1} holds with $\alpha=\beta=1$. \\
			\indent For Assumption \ref{as.2}, we need examine the conditions one by one. We can always find a sufficiently small $\bar{x}_1>0$ such that
			\begin{flalign*}
				\mu S - \mu S^2- \xi S^2I-\frac{\bar{q}+1}{2}\sigma^2S^2I^2 \geq 0
			\end{flalign*}
			for any $S\in(0,\bar{x}_1)$. And we have
			\begin{flalign*}
				\mu S - \mu S^2- \xi S^2I+\frac{\bar{p}-1}{2}\sigma^2S^2I^2 \leq C S +CS^2 \leq C(1+S^2),
			\end{flalign*}
			where $I\leq N \leq N(0)\vee1$ was used. Next, in order for
			\begin{flalign*}
				&\xi SI^2-(\mu+\gamma)I^2-\frac{\bar{q}+1}{2}\sigma^2S^2I^2\\
				=&I^2(\xi S-\mu-\gamma-\frac{\bar{q}+1}{2}\sigma^2S^2)\\
				=&-I^2(\frac{\bar{q}+1}{2}\sigma^2S^2-\xi S+\mu+\gamma)\geq 0
			\end{flalign*}
			to hold, we need 
			\begin{flalign*}
				\frac{\bar{q}+1}{2}\sigma^2S^2-\xi S+\mu+\gamma \leq 0,
			\end{flalign*}
			which implies
			\begin{flalign*}
				\xi^2-2(\bar{q}+1)(\mu+\gamma)\geq0.
			\end{flalign*}
			Similar to the above validation, we can finally conclude that Assumption \ref{as.2} holds with $\xi^2(t)\geq2(\bar{q}+1)(\mu(t)+\gamma(t))$. \\
			\indent For any $p>2$, 
			\begin{flalign*}
				&\langle DF(X)y,y\rangle+\frac{p-1}{2}|DG(X)y|^2 \\
				=&(-\mu-\xi I)y_1^2+(\xi S-(\mu+\gamma))y_2^2-\mu y_3^2+\sigma^2I^2y_1^2+\sigma^2S^2y_2^2\\
				\leq& \xi Sy_2^2+\sigma^2I^2y_1^2+\sigma^2S^2y_2^2\leq C|y|^2,
			\end{flalign*}
			which means that Assumption \ref{as.3} holds. The verification above means that Theorem \ref{strmain} can be applied to model \eqref{SIR}.\\
			\indent For our experiments, we take $\mu(t)=1+0.3\cos(\pi t), \xi(t)=2+\sin(\pi t), \gamma(t)=0.6+0.1\sin(\pi t), \sigma = 1, S(0)= 3, I(0)=0.5, R(0)=0.5$. As shown in Fig. \ref{figure5}, it is seen that the strong convergence rate is consistent with our theoretical results. On the other hand, we plot individual trajectories of the TEM method and the PPTEM method in Fig. \ref{figure6}. We notice that our method preserves the positivity of the numerical solutions, while the TEM method fails to achieve this.
		\end{example}
		
		\begin{figure}[htbp]
			\centering
			\includegraphics[height = 8cm, width=10cm]{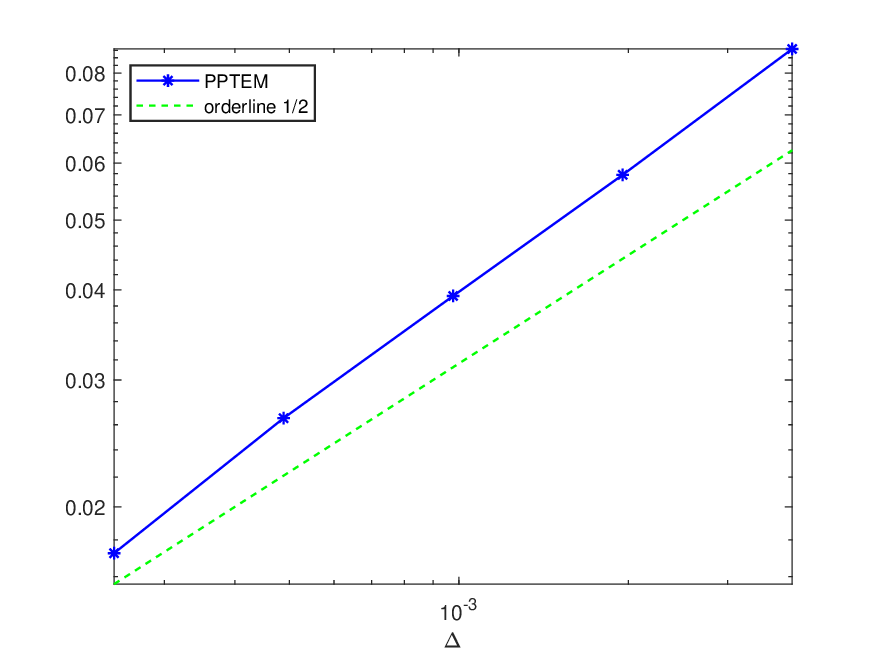}
			
			\caption{Mean square convergence rate of the PPTEM method for the SIRS epidemic model.}
			\label{figure5}
		\end{figure}
		\begin{figure}[htbp]
			\centering
			\subfigure{\label{SIR_path1}
				\includegraphics[height = 3cm, width=4cm]{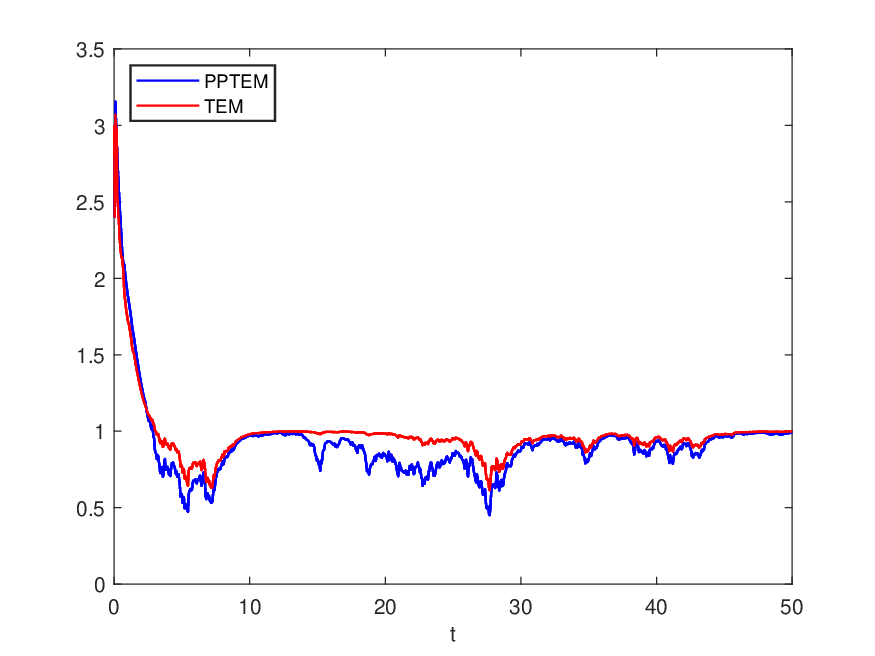}}		
			\subfigure{\label{SIR_path2}
				\includegraphics[height = 3cm, width=4cm]{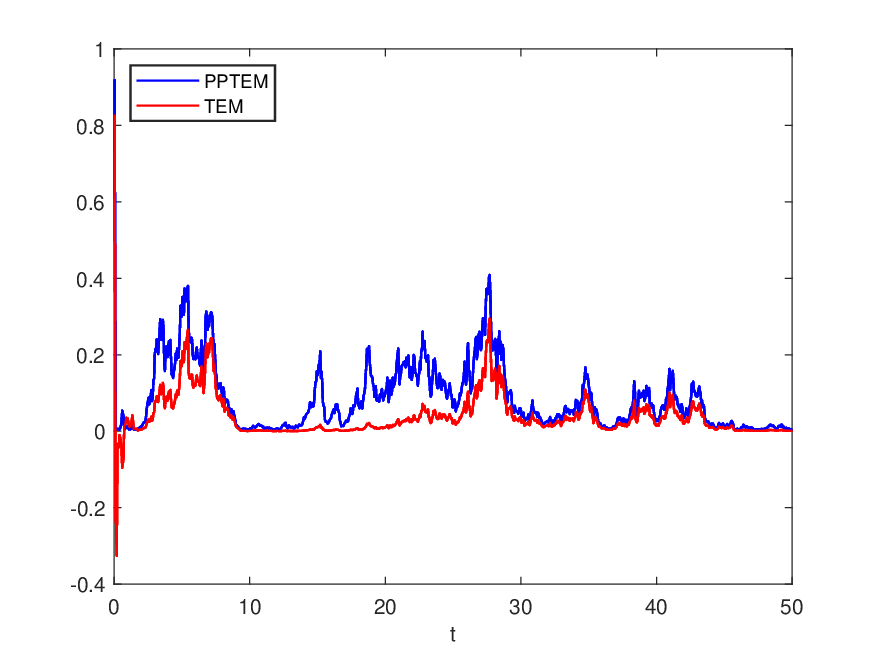}}
			\subfigure{\label{SIR_path3}
				\includegraphics[height = 3cm, width=4cm]{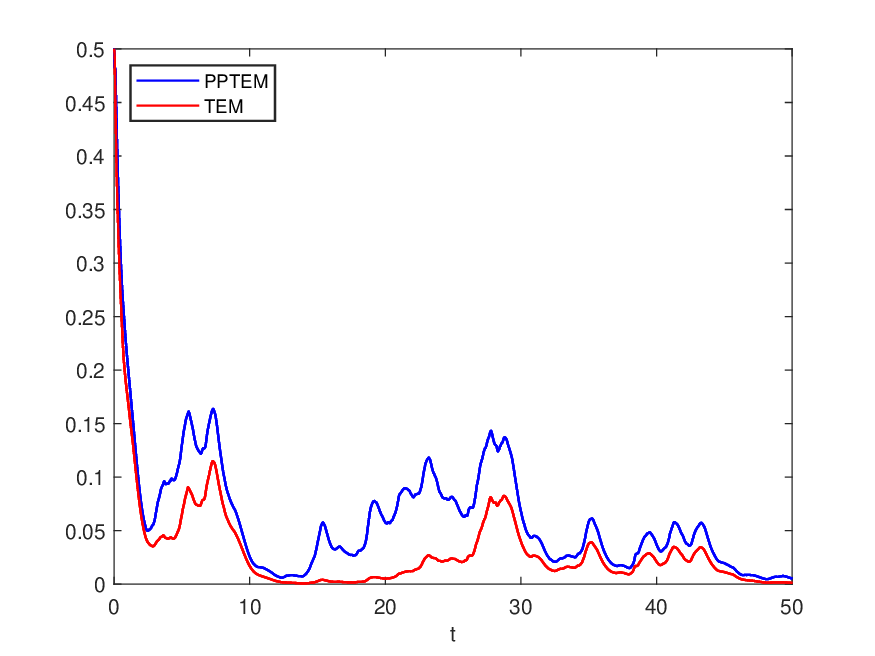}}
			\caption{Sample paths of SIRS epidemic model in Example \ref{SIR1}. $\Delta=0.05$; $T=50$. Left: S(t); Middle: I(t); Right: R(t).}
			\label{figure6}
		\end{figure}
		
		\begin{example}{Stochastic HIV/AIDS model}\label{HIV1}
			\par We consider stochastic HIV/AIDS model \cite{RHZ2025}
			\begin{flalign}\label{HIV}
				\left\{ 
				\begin{aligned}
					&\mathrm{d}S(t) = [N-\mu_1(t) S(t)-\xi S(t)I(t)]\mathrm{d}t-\sigma S(t)I(t)\mathrm{d}B(t), &t\in(0,T],\\
					&\mathrm{d}I(t) = [\xi S(t)I(t)-\mu_1(t)I(t)-\gamma(t) I(t)]\mathrm{d}t+\sigma S(t)I(t)\mathrm{d}B(t),&t\in(0,T],\\
					&\mathrm{d}A(t) =[\gamma(t) I(t)-\mu_1(t) A(t)-\mu_2(t)A(t)]\mathrm{d}t,&t\in(0,T],\\
					&S(0)=S_0,\quad I(0)=I_0,\quad A(0)=A_0,&t=0,
				\end{aligned}
				\right.
			\end{flalign}
			where $\mu_1,\mu_2,\xi,\gamma,\sigma>0$ and $S_0,I_0,A_0\geq0$. According to Theorem 2.2 in \cite{RHZ2025}, it holds that $U(t)=S(t)+I(t)+A(t)\leq G$, where $G$ is a positive constant. This along with similar verification in Example \ref{SIR1} shows that model \eqref{HIV} meet Assumptions \ref{as.1}, \ref{as.2} and \ref{as.3}.\\
			\indent For our experiments, we take $N=1, \mu_1(t)=0.5, \mu_2(t)=0.4, \xi=0.5, \gamma(t)=0.3, \sigma = 1, S(0)= 2, I(0)=1, R(0)=1$. As illustrated in Fig. \ref{figure7}, the observed strong convergence rate align with our theoretical predictions. Additionally, we present single trajectories for both the TEM method and the PPTEM method in Fig. \ref{figure8}. The results demonstrate that our method offers advantages over the TEM method in maintaining positivity.
		\end{example}

		\begin{figure}[htbp]
			\centering
			
			\includegraphics[height = 8cm, width=10cm]{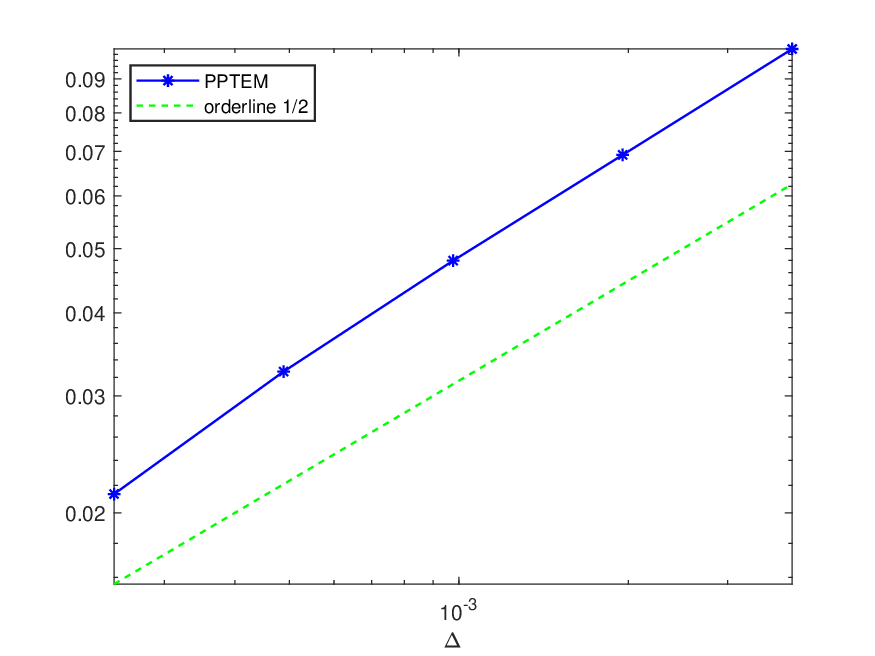}		
			\caption{Mean square convergence rate of the PPTEM method for the stochastic HIV/AIDS model. }
			\label{figure7}
		\end{figure}
		\begin{figure}[htbp]
			\centering
			\subfigure{\label{HIV_path1}
				\includegraphics[height = 3cm, width=4cm]{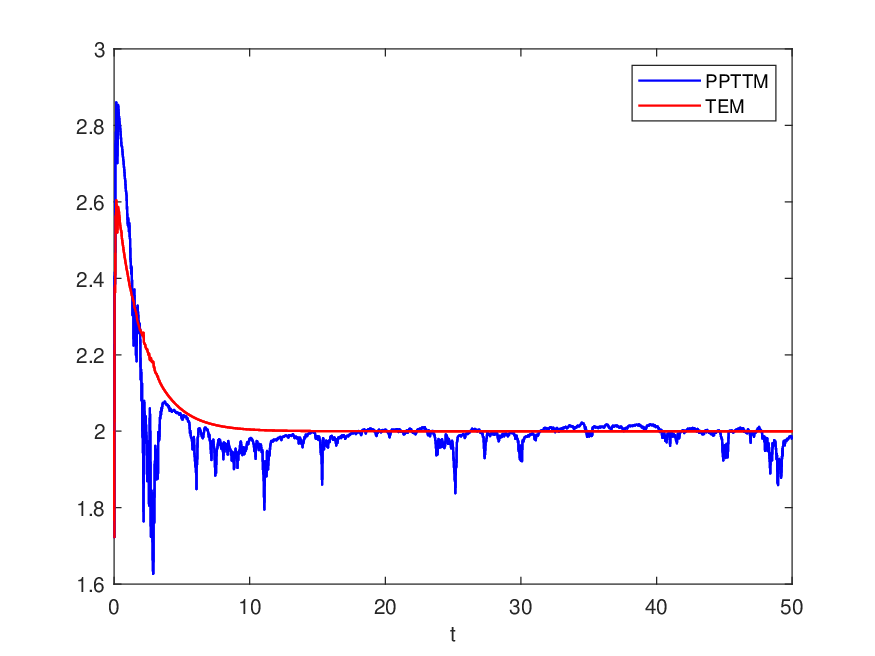}}		
			\subfigure{\label{HIV_path2}
				\includegraphics[height = 3cm, width=4cm]{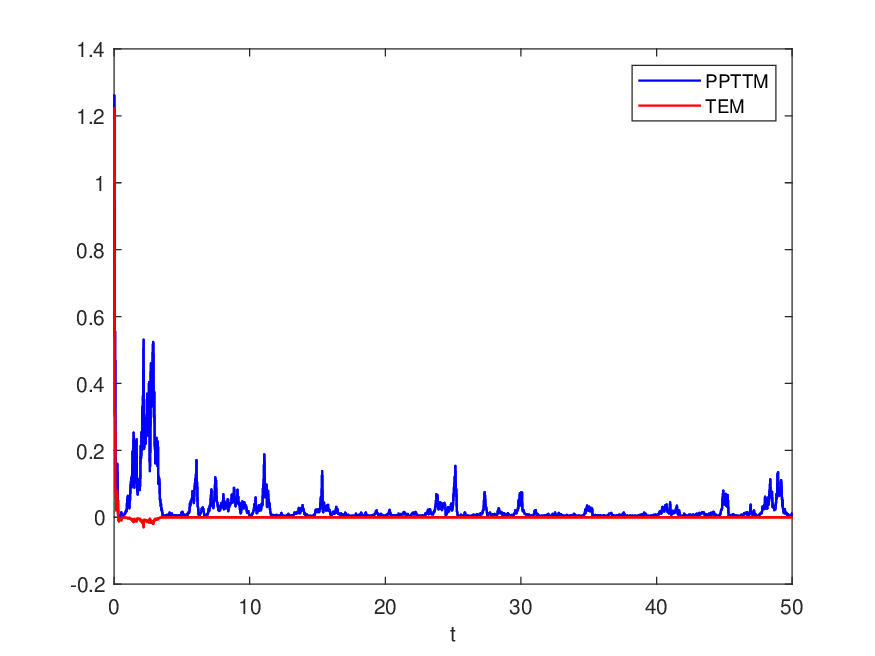}}
			\subfigure{\label{HIV_path3}
				\includegraphics[height = 3cm, width=4cm]{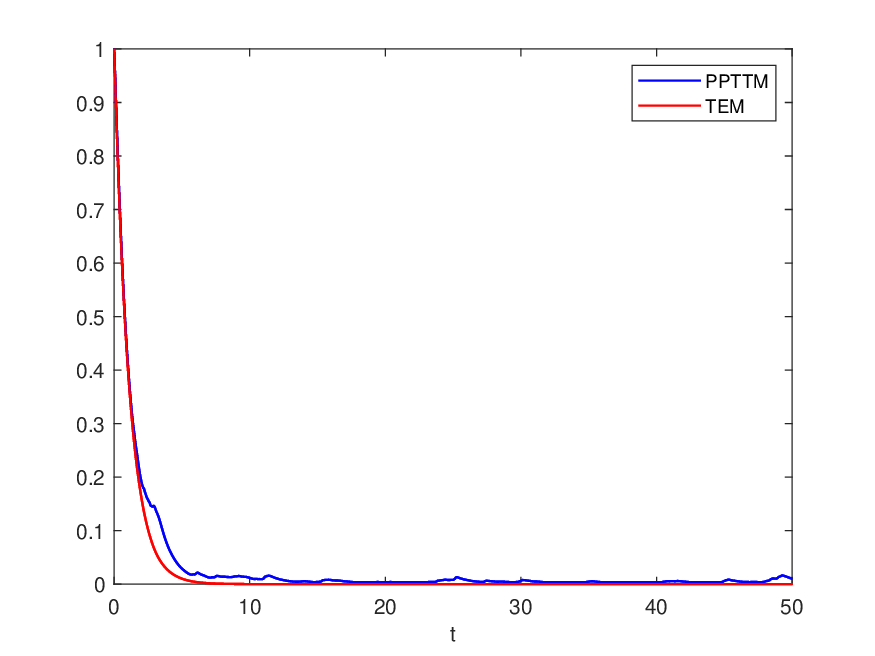}}
			\caption{Sample paths of stochastic HIV/AIDS model in Example \ref{HIV1}. $\Delta=0.02$; $T=50$. Left: S(t); Middle: I(t); Right: A(t).}
			\label{figure8}
		\end{figure}
		
		\par In conclusion, it is important to highlight that the theoretical strong convergence rates of Example \ref{6.1} in \cite{YXF2024} and Example \ref{HIV1} in \cite{RHZ2025} are approximately 1/2 and 1/4, respectively. Our method achieves a strong convergence rate of 1/2, both theoretically and through numerical tests.

		\section{Conclusion}
		
		\indent In this paper, we present the PPTEM method  designed for general multi-dimensional SDEs with positive solutions. We provide a detailed analysis of both the strong error associated with this method. Specifically, we demonstrate that the optimal strong convergence rate is of order 1/2. Our numerical examples confirm that the PPTEM method retains its theoretical accuracy and effectively preserves the positivity of the numerical solutions. Future research will focus on developing a higher-accuracy positive-preserving numerical method for solving multi-dimensional SDEs with positive solutions.\\

		\appendix
		\renewcommand{\thesection}{Appendix \Alph{section}}
		\renewcommand{\theequation}{\Alph{section}.\arabic{equation}}
		\section{Proof of Lemma 4.1}
		\label{Appendix11}
		Using the H\"older inequality yields that
		\begin{flalign*}
			&\E[\sup_{ t \in[0,T]}|\tilde{Y}(t)-Y(t)|^p]\\
			\leq&\E[\max_{0\leq k\leq N}\sup_{t_k\leq t\leq t_{k+1}}|f_r(Y_k)(t-t_k)+g_r(Y_k)(B(t)-B(t_k))|^p]\\
			\leq&2^{p-1}\E\Big[\max_{0\leq k\leq N}\sup_{t_k\leq t\leq t_{k+1}}|f_r(Y_k)|^p(t-t_k)^p+|g_r(Y_k)|^p|B(t)-B(t_k)|^p\Big]\\
			\leq&C_r \Delta^p+C_r\Big(\E[\max_{0\leq k\leq N}\sup_{t_k\leq t\leq t_{k+1}}|B(t)-B(t_k)|^{2n}]\Big)^\frac{p}{2n}.
		\end{flalign*}
		It follows from the Doob martingale inequality that
		\begin{flalign*}
			&\E[\max_{0\leq k\leq N}\sup_{t_k\leq t\leq t_{k+1}}|B(t)-B(t_k)|^{2n}]\\
			\leq&\sum_{k=0}^{N}\E[\sup_{t_k\leq t\leq t_{k+1}}|B(t)-B(t_k)|^{2n}]\leq\Big(\frac{2n}{2n-1}\Big)^{2n}\sum_{k=0}^{N}\E[|B(t_{k+1})-B(t_k)|^{2n}]\\
			\leq&\Big(\frac{2n}{2n-1}\Big)^{2n}\sum_{k=0}^{N}(2n-1)!!\Delta^n\leq\Big(\frac{2n}{2n-1}\Big)^{2n}(2n-1)!!(T+1)\Delta^{n-1},
		\end{flalign*}
		where $(2n-1)!!=(2n-1)\times(2n-3)\times\cdots\times3\times1$. By using \eqref{con3.3} and the inequality
		\begin{flalign*}
			((2n-1)!!)^\frac{1}{n}\leq \frac{1}{n}\sum_{i=0}^{n}(2i-1)=n,
		\end{flalign*}
		we have
		\begin{flalign*}
			&\E[\sup_{ t \in[0,T]}|\tilde{Y}(t)-Y(t)|^p]
			\leq C_r\Delta^p+C_rn^{\frac{p}{2}}\Delta^{\frac{p(n-1)}{2n}}.
		\end{flalign*}
		The assertion \eqref{as3.3} holds.
		
		\section{Proof of Lemma 4.6}\label{Appendix12}
		Denote $\widetilde{\Delta} = \Delta^{-\frac{1}{2\alpha}}\vee \Delta^{-\frac{1}{2(\beta+1)}}$.
		Using Assumption \ref{as.3} yields that 
		\begin{flalign*}
			&|\pi_\Delta(x)-\pi_\Delta(y)+(f_\Delta(x)-f_\Delta(y))\Delta|^2+\mu\Delta|g_\Delta(x)-g_\Delta(y)|^2\\
			=& |\pi_\Delta(x)-\pi_\Delta(y)|^2+2\langle\pi_\Delta(x)-\pi_\Delta(y),f_\Delta(x)-f_\Delta(y)\rangle\Delta
			+|f_\Delta(x)-f_\Delta(y)|^2\Delta^2\\
			&+\mu\Delta|g_\Delta(x)-g_\Delta(y)|^2\\
			\leq& |\pi_\Delta(x)-\pi_\Delta(y)|^2+2K_3|\pi_\Delta(x)-\pi_\Delta(y)|^2\Delta
			+|f_\Delta(x)-f_\Delta(y)|^2\Delta^2\\
			=&(1+C\Delta)|\pi_\Delta(x)-\pi_\Delta(y)|^2+\Delta^2|f_\Delta(x)-f_\Delta(y)|^2:=(1+C\Delta)I_1+\Delta^2I_2^2.
		\end{flalign*}
		For $I_2$, it follows from Assumption \ref{as.1} and Remark \ref{Remark2} that
		\begin{flalign*}
			I_2\leq&K_1\Big(1+|\pi_\Delta(x)|^\alpha+|\pi_\Delta(y)|^\alpha+|\pi_\Delta(x)|^{-\beta}+|\pi_\Delta(y)|^{-\beta}\Big)|\pi_\Delta(x)-\pi_\Delta(y)|\\
			\leq&K_1\Big(1+2d^{\frac{\alpha}{2}}|\phi^{-1}(h(\Delta))|^\alpha+2d^{\frac{\alpha}{2}}|\phi^{-1}(h(\Delta))|^\beta\Big)|\pi_\Delta(x)-\pi_\Delta(y)|\\
			\leq&K_1\Big(1+C(\widetilde{\Delta}^\alpha+\widetilde{\Delta}^\beta)\Big)|\pi_\Delta(x)-\pi_\Delta(y)|.
		\end{flalign*}
		From Remark \ref{Remark2}, if $\alpha\geq\beta+1$, then we can derive $\widetilde{\Delta}^\alpha+\widetilde{\Delta}^\beta\leq 2\Delta^{-\frac{1}{2}}$. If $\alpha<\beta+1$, then $\widetilde{\Delta}^\alpha+\widetilde{\Delta}^\beta\leq 2\Delta^{-\frac{1}{2}}$ also can be derived. Consequently, 
		\begin{flalign*}
			&|\pi_\Delta(x)-\pi_\Delta(y)+(f_\Delta(x)-f_\Delta(y))\Delta|^2+\mu\Delta|g_\Delta(x)-g_\Delta(y)|^2\\
			\leq& (1+C\Delta)I_1+C\Delta^2(1+C\Delta^{-\frac{1}{2}})^2|\pi_\Delta(x)-\pi_\Delta(
			y)|^2\leq(1+C\Delta)I_1.
		\end{flalign*}
		According to $I_1$, we need prove $I_1\leq|x-y|^2$. For the symmetry of $x$ and $y$, we may as well suppose $|x|\leq|y|$. Using mathematical induction, we can derive the following estimates:
		\begin{flalign}\label{pipi}
			|\pi_\Delta(x)-\pi_\Delta(y)|\leq|x-y|.
		\end{flalign}
		When $d=1$, it is easily seen that \eqref{pipi} holds for $x,y\in(0,(\phi(h(\Delta)))^{-1})$, $((\phi(h(\Delta)))^{-1},\phi(h(\Delta)))$ or $(\phi(h(\Delta)),+\infty)$.
		\begin{flalign*}
			|\pi_\Delta(x)-\pi_\Delta(y)|= \left\{ 
			\begin{aligned}
				&|(\phi(h(\Delta)))^{-1}-y|\leq|x-y|,& &x\leq(\phi(h(\Delta)))^{-1}\leq y\leq \phi^{-1}(h(\Delta)), \\
				&|(\phi(h(\Delta)))^{-1}-\phi^{-1}(h(\Delta))|\leq|x-y|,& &x\leq(\phi(h(\Delta)))^{-1}\leq \phi^{-1}(h(\Delta))\leq y, \\
				&|x- \phi^{-1}(h(\Delta))|\leq|x-y|,& &(\phi(h(\Delta)))^{-1}\leq x\leq \phi^{-1}(h(\Delta))\leq y.\\
			\end{aligned}
			\right.
		\end{flalign*}
		Therefore, \eqref{pipi} is true for $d=1$. When $d=n$, we have \eqref{pipi}, i.e., 
		\begin{flalign*}
			|\pi_\Delta(x)-\pi_\Delta(y)|^2=&(\pi_\Delta(x_1)-\pi_\Delta(y_1))^2+\cdots+(\pi_\Delta(x_n)-\pi_\Delta(y_n))^2
			\leq (x_1-y_1)^2+\cdots+(x_n-y_n)^2
		\end{flalign*}
		for $x,y\in \RR^n_+$, where $x_j$ and $y_j$ denote the $j$-th element of $x$ and $y$, respectively. When $d=n+1$, 
		\begin{flalign*}
			|\pi_\Delta(x)-\pi_\Delta(y)|^2=& (\pi_\Delta(x_1)-\pi_\Delta(y_1))^2+\cdots+(\pi_\Delta(x_n)-\pi_\Delta(y_n))^2+(\pi_\Delta(x_{n+1})-\pi_\Delta(y_{n+1}))^2\\
			\leq&(x_1-y_1)^2+\cdots+(x_n-y_n)^2+(x_{n+1}-y_{n+1})^2.
		\end{flalign*}
		Therefore, \eqref{pipi} holds for $d=n+1$. Furthermore, we prove $I_1\leq|x-y|^2$. 
		
		\section{Proof of Lemma 4.7}\label{Appendix13}
		For an arbitrarily fixed $\Delta\in(0,1)$ and any $0\leq t\leq T$, there exists unique integer $k\geq 0$ such that $t_k\leq t <t_{k+1}$. By the H\"older inequality and Theorem 1.7.1 in \cite{M01}, it follows that
		\begin{flalign*}
			\E[|X(t)-X(t_k)|^p]=&\E\Big|\int_{t_k}^{t}f(X(s))\mathrm{d}s+\int_{t_k}^{t}g(X(s))\mathrm{d}B(s)\Big|^p\\
			\leq&C\Big(\Delta^{p-1}\int_{t_k}^{t}\E\big|f(X(s))\big|^p\mathrm{d}s+\Delta^\frac{p-2}{2}\int_{t_k}^{t}\E\big|g(X(s))\big|^p\mathrm{d}B(s)\Big).
		\end{flalign*}
		It follows from Remark \ref{Remark1} and Lemma \ref{Lm2.1} that
		\begin{flalign*}
			\E[|f(X(s))|^p]\leq K_1\E[(1+|X(s)|^{\alpha+1}+|X(s)|^{-\beta})^p]\leq C(1+\E[|X(s)|^{p(\alpha+1)}]+\E[|X(s)|^{-p\beta}])\leq C.
		\end{flalign*}
		Similarly, we can have $\E[|g(X(s))|^p]\leq C$. Consequently, the assertion \eqref{eq ED} can be obtained.

		\bibliographystyle{sn-bibliography}

	\end{document}